\title[Synchronization for the inertial Kuramoto model]{On the exponential synchronization for the asymmetric second-order Kuramoto model}
\author[Zhu]{Tingting Zhu \textsuperscript{\MakeLowercase{a,b}}}
\author[Zhang]{Xiongtao Zhang \textsuperscript{\MakeLowercase{c},*}}
\newtheorem{theorem}{Theorem}[section]
\newtheorem{lemma}{Lemma}[section]
\newtheorem{definition}{Definition}[section]
\def\charf {\mbox{{\text 1}\kern-.30em {\text l}}}
\begin{document}

\date{\today}

\subjclass{34D05, 34D06, 34C15, 92D25} 
\keywords{synchronization, asymmetric network, Kuramoto model, inertia, frustration, exponential rate}

\thanks{\textsuperscript{a} School of Artificial Intelligence and Big Data, Hefei University, Hefei, 230601, China (ttzhud201880016@163.com)}
\thanks{\textsuperscript{b} Key Laboratory of Applied Mathematics and Artificial Intelligence Mechanism, Hefei University, Hefei, 230601, China}
\thanks{\textsuperscript{c} School of Mathematics and Statistics, Wuhan University, Wuhan, 430072, China (zhangxt@whu.edu.cn)}
\thanks{\textsuperscript{*} Corresponding author. }

\begin{abstract}
In this paper, we study the synchronization problem of nonuniform second-order Kuramoto model with homogeneous dampings and frustration effects on an asymmetric network. More precisely, we focus on the second order model defined on an asymmetric graph with depth no greater than two and present theories on the complete frequency synchronization. Due to the absence of the gradient flow structure, we develop novel energy functions to control the diameters of phase and frequency respectively, which allows us to construct first-order Gronwall-type inequalities. This eventually gives rise to the exponential convergence to the synchronized state in a regime in terms of large coupling strength, small inertia and frustration.
\end{abstract}
\maketitle \centerline{\date}

\section{Introduction}\label{sec:1}
\vspace{0.5cm}

Synchronization is a widespread phenomenon that has been observed across various fields, such as  the circadian rhythms, neural networks, synchronous firing of biological oscillators \cite{B-J-Y84, B-B66, Do-B12, E-K91, M-S90}. Numerous mathematical models have been proposed to illustrate this phenomenon \cite{E91, K75, W67}. The Kuramoto model \cite{K75} has got considerable attention among these models. Theoretical studies on synchronization for the first-order Kuramoto model have been extensively explored, for example, phase-locking and critical coupling \cite{C-H-J-K12,H-K-P15, H-K-R16, H-R20}, network topology \cite{H-L14, Z-Z23}, phase shifts \cite{H-K-L14,  H-K-P18, S-K86}, and time delay \cite{D-H-K20, H-J-K-U20}. Additionally, it is essential to incorporate the effect of inertia into the Kuramoto model in many applications like power systems \cite{C-C-C95, G-W-Y-Z-X-Y20} and Josephson junction array \cite{T-S-S05, W-S97}. In this paper, we focus on the Kuramoto model with inertia effects and phase shifts on an asymmetric network. 

To set up the stage, consider a graph defined by $\mathcal{G} = (V, E, \Psi)$, consisting of a vertex set $V = \{1,2,\ldots, N\}$, an edge set $E \subset V \times V$, and weighted matrix $\Psi= (\psi_{ij})_{N\times N}$ whose element $\psi_{ij}$ denotes the capacity of the communication weight from vertex $j$ to $i$. Note that $\psi_{ij}> 0$ iff $(j,i) \in E$.
The Kuramoto oscillators are assumed to be located at the vertices of graph $G$, and let $\theta_i = \theta_i(t)$ and $\omega_i(t) = \dot{\theta}_i(t) $ be the phase and instantaneous frequency of the $i$-th oscillator. Then, consider the following second-order Kuramoto oscillators with the dynamics given by
\begin{equation}\label{s_phs_com}
m_i \ddot{\theta}_i(t) + d_i\dot{\theta}_i(t) = \Omega_i + \frac{K}{N} \sum_{k=1}^N \psi_{ik} \sin (\theta_k(t) - \theta_i(t) + \alpha_{ik}), \quad t \ge 0,  \ i=1,2,\ldots,N,
\end{equation}
subject to the initial history
\begin{equation*}
\theta_i(0) = \theta_{i0}, \quad \omega_i(0) = \omega_{i0}.
\end{equation*}
Here, $\Omega_i$ indicates the natural frequency of $i$-th oscillator, $m_i > 0$ stands for the strength of inertia, $d_i > 0$ denotes the damping coefficient, and $0 \le \alpha_{ik} < \frac{\pi}{2}$ represents the effect of frustration, and $K > 0$ is the coupling strength. Note that $\alpha_{ii} =0$ for all $1 \le i \le N$ is assumed.
The rich dynamics of model \eqref{s_phs_com} has been widely investigated owing to the potential applications such as in power grids and complex networks  \cite{D-B12, D-C-B13, H-C-Y-S99, L-X-Y14, T-L-O97, Ta-L-O97}.

In recent years, several efforts have been made to address the synchronization analysis of \eqref{s_phs_com}. For the setting $m_i = m, d_i=1$ on an all-to-all coupled network $\psi_{ik} =1$, the model \eqref{s_phs_com} without frustration ($\alpha_{ik} =0$) was analyzed in \cite{C-H-Y11}, providing a frequency synchronization theory for a limited class of initial configurations. The relaxation dynamics of phase-locked state was explored in \cite{C-H-N13}. The authors in \cite{H-J-K19, H-J-K-M22} demonstrated that the initial aggregation of the majority of oscillators can achieve a complete frequency synchronization. For a more general network, sufficient frameworks contributing to synchronized behaviors for the uniform second-order Kuramoto ensemble on a symmetrically connected network were investigated in \cite{C-L-H-X-Y14, W-Q17}. Moreover, the nonuniform second-order phase model with the effects of damping and no frustration were considered  in \cite{C-D-H23, C-H-M18, C-L19, L-X-Y14, W-C-B22}. For instance, the authors in \cite{L-X-Y14} dealt with the case of homogeneous damping $\frac{m_i}{d_i} = \frac{m_j}{d_j}$ for all $i,j$, and sufficient conditions leading to frequency synchronization were presented. For the case of heterogeneous dampings, the authors in \cite{C-L19} showed the convergence of frequency synchronization by exploiting the gradient flow theory following from a symmetric and connected graph topology. Note that most of the above mentioned results are mainly based on the symmetric structure of system and the second-order gradient flow theory, and there is no much information on the convergence rate. Additionally, introducing the frustration term into the sinusoidal function in \eqref{s_phs_com} and non-all-to-all interaction inevitably cause the absence of second-order gradient-like formulation such that the energy fuction method used in \cite{C-L19,L-X-Y14} can not be directly applied. We refer to \cite{D-Z-P-L-J18, F-B-J-K23, Ha-K-L14, L-H16} for the results on the interplay of inertia and frustration. To the best of the authors' knowledge, the theoretical results for model \eqref{s_phs_com} with frustration effects and non-symmetric interaction are relatively few.
 
In this work, we study the emergence of synchronization of the second-order Kuramoto model \eqref{s_phs_com} on a typical asymmetric interaction network:  an asymmetric network with depth not exceeding two (see \eqref{com_net}).  For simplicity, we make the following assumptions
\begin{equation}\label{m/d_equal}
\frac{m_i}{d_i} = \frac{m_j}{d_j} = \gamma, \quad \text{for all } \  i \ne j.
\end{equation}
Based on \eqref{m/d_equal}, the nonuniform second-order Kuramoto model \eqref{s_phs_com} is rewritten in a simpler form as follows
\begin{equation}\label{second_theta_system}
\gamma \ddot{\theta}_i(t) + \dot{\theta}_i(t) = \frac{\Omega_i}{d_i} + \frac{K}{N} \sum_{k=1}^N \frac{\psi_{ik}}{d_i} \sin (\theta_k(t) - \theta_i(t) + \alpha_{ik}).
\end{equation}
When $\gamma=0$, the equation \eqref{second_theta_system} reduces to the well-known Kuramoto-type model, for which numerous works have been done to establish the emergence of asymptotic synchronization when the initial phase diameter is less than $\pi$. When $\gamma$ is small, it is intuitive to view the second-order system \eqref{second_theta_system} as an approximation to the original first order model. This naturally leads to the following question:
\vspace{2mm}
\begin{itemize}
 \item When initial phase diameter is less than $\pi$, dose the synchronization occur in \eqref{second_theta_system}?
\end{itemize}
\vspace{2mm}

This is the motivation of the present paper and we briefly discuss the challenges and outline our main contributions. The first challenge comes from the absence of the symmetry, so that we cannot apply the second order gradient flow method used in  \cite{L-X-Y14} to gain the dissipation. Secondly, as we intend to study the emergence of synchronization in \eqref{second_theta_system} when initial phase diameter is less than $\pi$, it is proper to directly study the dynamic behavior of the phase diameter. However, the dynamic of the phase diameter is not expected to be governed by a second order differential inequality, since the diameter is only Lipschitz continuous. Therefore, it's nontrivial to yield the dissipation of the phase diameter in the second-order system \eqref{second_theta_system}. To deal with these difficulties, we develop novel energy functions (see \eqref{energy_1} and \eqref{energy_2}) involving  the phase diameter, frequency diameter, acceleration diameter and jerk diameter (the diameter of the derivative of the accelerations). With some delicate estimates, we successfully establish the first order Gronwall type inequalities of these energy functions, which eventually yield the exponential emergence of the synchronization for large coupling strength, small frustration effect and over-damping.   
%
%
%
%

The rest of the paper is organized as follows. In Section \ref{sec:2}, we introduce some notations, and state our framework and main result. In Section \ref{sec:3}, we present the emergence of complete synchronization for the nonuniform second-order Kuramoto oscillators with homogeneous dampings and heterogeneous frustrations under the network structure \eqref{com_net}. In Section \ref{sec:5}, we provide some simulations to substantiate our main result. Section \ref{sec:6} is delicated to a brief summary of our work and discussion of the future issue.

\section{Preliminaries}\label{sec:2}
\setcounter{equation}{0}
In this section, we present several notations which will be frequently used throughout the paper, and provide our main result on the frequency synchronization for the second-order Kuramoto model \eqref{second_theta_system}. 

\subsection{Notations}
For notational simplicity, we use $\theta(t)$, $\omega(t)$, $a(t)$ and  $b(t)$ to represent the phase, frequency, acceleration, and jerk vectors respectively. To be specific, we have
\begin{equation*}
\begin{aligned}
&\theta(t) = (\theta_1(t), \theta_2(t), \ldots,\theta_N(t)), \quad \omega(t) = (\omega_1(t), \omega_2(t), \ldots, \omega_N(t)), \\
& a(t) = (a_1(t), a_2(t),\ldots,a_N(t)), \quad b(t) = (b_1(t), b_2(t), \ldots, b_N(t)),
\end{aligned}
\end{equation*}
where the following relations hold naturally
\begin{equation}\label{defi_wab}
\omega_i(t) := \dot{\theta}_i(t), \quad a_i(t) := \dot{\omega}_i(t), \quad b_i(t) := \dot{a}_i(t).
\end{equation}
Then, the corresponding parameters such as diameters of phase, frequency, acceleration and jerk are defined as follows,
\begin{equation*}
\begin{aligned}
&D_\theta(t) = \max_{1 \le i \le N} \theta_i(t) - \min_{1 \le i \le N} \theta_i(t), \quad D_\omega(t) = \max_{1 \le i \le N} \omega_i(t) - \min_{1 \le i \le N} \omega_i(t),\\
&D_a(t) = \max_{1 \le i \le N} a_i(t) - \min_{1 \le i \le N} a_i(t), \quad D_b(t) = \max_{1 \le i \le N} b_i(t) - \min_{1 \le i \le N} b_i(t),\\
&D_\Omega = \max_{1 \le i \le N} \frac{\Omega_i}{d_i} - \min_{1 \le i \le N} \frac{\Omega_i}{d_i}, \quad \bar{\alpha} = \max_{1\le i,j \le N} \alpha_{ij}. 
\end{aligned}
\end{equation*}
We recall the concept of the complete synchronization for the Kuramoto model.

\begin{definition}
Let $\theta(t) = (\theta_1(t), \ldots, \theta_N(t))$ be a phase vector of Kuramoto oscillators. The Kuramoto ensemble exhibits complete frequency synchronization asymptotically iff the relative frequency difference tends to zero asymptotically:
\begin{equation*}
\lim_{t \to + \infty} |\omega_i(t) - \omega_j(t)| = 0, \quad \forall \ i \ne j.
\end{equation*}
\end{definition}

\subsection{Main result} 
In this part, we deliberate our main result on the asymptotic synchronization for the model \eqref{second_theta_system}. We introduce the following network structure:
\begin{equation}\label{com_net}
\mathcal{C}:= \min_{1 \le i\ne j \le N} \left\{ \frac{\psi_{ij}}{d_i} + \frac{\psi_{ji}}{d_j} +\sum_{\substack{k = 1 \\ k \ne i,j}}^N \min \left\{\frac{\psi_{ik}}{d_i}, \frac{\psi_{jk}}{d_j}\right\} \right\} > 0,
\end{equation}
which means that the underlying graph can be asymmetric but its depth is less than or equal to two.
Based on the network topology \eqref{com_net}, we set
\begin{equation*}
\psi_u = \max_{1 \le i \neq j \le N} \frac{\psi_{ij}}{d_i},
\end{equation*}
and introduce the following Assumption $(\mathcal{A})$:

$\bullet$ $(\mathcal{A}_1)$: There exists a positive constant $\beta \in (0,\pi)$ such that the following condition holds for initial configuration
\begin{equation}\label{initial_con}
2\gamma (D_\Omega+ 2K \psi_u \sin \bar{\alpha}) + (1 + 4\gamma K \psi_u ) D_\theta(0) + 3\gamma D_\omega(0) < \beta < \pi.
\end{equation}
This condition shows that if the phase diameter is initially close to $\pi$, the inertia or the initial frequency diameter is required to be small. In the inertial model, this is a natural assumption if we want to keep all the oscillators staying in the half circle.

 $\bullet$ $(\mathcal{A}_2)$: Based on Assumption  $(\mathcal{A}_1)$, we further assume that there exists a positive constant $D^\infty < \min \left\{\beta, \frac{\pi}{2} \right\}$ such that the following conditions hold for the parameters defined previously
\begin{align}
&D^\infty + \bar{\alpha} < \frac{\pi}{2},\label{frustration_con}\\
&\gamma K < \min\left\{\frac{\mathcal{C}\cos \bar{\alpha} \sin \beta}{32N\psi_u^2 \beta}, \frac{N \beta}{\mathcal{C} \cos \bar{\alpha} \sin \beta},  \frac{\mathcal{C}\cos (D^\infty + \bar{\alpha})}{32N \psi^2_u}, \frac{2N }{\mathcal{C}\cos (D^\infty + \bar{\alpha})}\right\} ,   \label{gammaK_con}\\
& K > \max\left\{ \frac{8N\beta\left(D_\Omega + 2K \psi_u\sin\bar{\alpha} \right)}{D^\infty \mathcal{C}  \cos \bar{\alpha} \sin \beta}, \frac{8N\mu}{\mathcal{C} \cos (D^\infty + \bar{\alpha})}\right\}, \label{K_con}
\end{align}
where $\mu$ is given by 
\begin{equation}\label{mu}
\mu := \frac{64N^2\psi^2_u\beta^2(D_\Omega +  2K \psi_u \sin \bar{\alpha}) (D^\infty + \sin \bar{\alpha})}{\mathcal{C}^2  \cos^2 \bar{\alpha} \sin^2 \beta}.
\end{equation}
Obviously, these conditions can be ensured for small $\gamma$, $\bar{\alpha}$ and large $K$.
Under the network structure \eqref{com_net}, our main result in this paper is presented as below.
\begin{theorem}\label{com_main}
Let $\theta(t)$ be a solution to system \eqref{second_theta_system} with the network structure \eqref{com_net}, and suppose the Assumption $(\mathcal{A})$ holds. Then,  we have 
\begin{equation*}
D_\omega(t)  \le C e^{-\Lambda t}, \quad t \ge 0,
\end{equation*}
where $C, \Lambda$ are positive constants depending on initial data and system parameters.
\end{theorem}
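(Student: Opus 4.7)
The plan is to combine a bootstrap argument with two tailored Lyapunov functionals, one controlling the phase diameter and one driving the frequency diameter to zero. Let $T^\ast := \sup\{T \ge 0 : D_\theta(t) < \beta \text{ on } [0,T)\}$, which is positive by \eqref{initial_con} and the continuity of $D_\theta$; the first half of the argument aims at $T^\ast = \infty$ and, moreover, at $D_\theta(t) < D^\infty$ for all sufficiently large $t$. On $[0,T^\ast)$ I would first derive preliminary pointwise bounds on $D_\omega$, $D_a$ and $D_b$ from \eqref{second_theta_system}: rewriting the equation as $\gamma \dot\omega_i + \omega_i = \tfrac{\Omega_i}{d_i} + \tfrac{K}{N}\sum_k \tfrac{\psi_{ik}}{d_i}\sin(\theta_k-\theta_i+\alpha_{ik})$, applying the Duhamel formula for $\omega_i$ and then differentiating once and twice more, the crude bound $|\sin(\cdot)| \le 1$ together with $D_\theta < \beta$ yields the required size estimates in terms of $D_\Omega$, $K\psi_u$, $\sin\bar\alpha$, the initial data and $\gamma$.

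For the first functional I would take, guided by the shape of the left-hand side of \eqref{initial_con},
\begin{equation*}
\mathcal{E}_1(t) := (1+4\gamma K\psi_u)\, D_\theta(t) + 3\gamma\, D_\omega(t) + 2\gamma\bigl(D_\Omega + 2K\psi_u \sin\bar\alpha\bigr).
\end{equation*}
Taking upper Dini derivatives at indices attaining the extrema of $\theta$ and $\omega$, and combining $D_\theta' \le D_\omega$ with the identity $\gamma D_\omega' + D_\omega \le D_\Omega + [\text{coupling difference}]$, I expect to obtain on $[0,T^\ast)$ a scalar first-order inequality $\mathcal{E}_1' \le -\kappa\, \mathcal{E}_1 + (\text{small source})$. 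The strict initial bound \eqref{initial_con} then keeps $\mathcal{E}_1(t) < \beta$ invariant, forcing $D_\theta(t) < \beta$ throughout $[0,T^\ast)$ and closing the bootstrap, so that $T^\ast = \infty$. A further iteration of the same Gronwall estimate, invoking the largeness of $K$ in \eqref{K_con}, drives $D_\theta(t)$ below $D^\infty$ after a finite time $t_0$; from $t_0$ on, the assumption $D^\infty + \bar\alpha < \pi/2$ ensures that the coupling is strictly dissipative.

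For $t \ge t_0$ I would introduce the second functional on the faster scales,
\begin{equation*}
\mathcal{E}_2(t) := D_\omega(t) + c_1\gamma\, D_a(t) + c_2\gamma^2\, D_b(t),
\end{equation*}
with small constants $c_1,c_2$ chosen using the smallness of $\gamma K$ in \eqref{gammaK_con}. The key dissipation input is the depth-two asymmetric coupling estimate: for extremal indices $i,j$ of $\omega$, the coupling difference
\begin{equation*}
\frac{K}{N}\sum_{k=1}^N \Bigl[\tfrac{\psi_{ik}}{d_i}\sin(\theta_k-\theta_i+\alpha_{ik}) - \tfrac{\psi_{jk}}{d_j}\sin(\theta_k-\theta_j+\alpha_{jk})\Bigr]
\end{equation*}
is controlled, by pairing each $k\ne i,j$ through $\min(\psi_{ik}/d_i,\psi_{jk}/d_j)$, adding the direct terms $k=i,j$, and using monotonicity of $\sin$ on $[-D^\infty-\bar\alpha,\, D^\infty+\bar\alpha]\subset(-\pi/2,\pi/2)$, by $-\tfrac{K\mathcal{C}\cos(D^\infty+\bar\alpha)}{N}\, D_\theta + \O(K\psi_u\sin\bar\alpha)$. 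Performing the analogous Dini computations for $D_a$ and $D_b$, obtained by differentiating the $\omega$-equation once and twice and applying the chain rule to $\sin$, produces a linear system in $(D_\omega,D_a,D_b)$ which, through the choice of $c_1,c_2$, collapses to a first-order Gronwall inequality $\mathcal{E}_2' \le -\Lambda\, \mathcal{E}_2$. This yields $D_\omega(t) \le \mathcal{E}_2(t) \le C e^{-\Lambda t}$, as required.

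The main obstacle is twofold. The loss of the second-order gradient-flow structure, caused simultaneously by the asymmetry of $\Psi$ and the frustration $\alpha_{ik}$, rules out the energy-dissipation identities of \cite{L-X-Y14, C-L19}; the asymmetric coupling must therefore be handled globally in the extremal pair $(i,j)$ through the combinatorial constant $\mathcal{C}$, and this is exactly what pins down the quantitative thresholds \eqref{gammaK_con}-\eqref{K_con}. Secondly, the diameters $D_\theta,D_\omega,D_a,D_b$ are only Lipschitz, so one cannot write a clean second-order ODE for $D_\omega$ alone; the jerk diameter $D_b$ has to be brought in precisely to absorb the cross-term $\gamma(a_i-a_j)$ arising from the identity $\omega_i-\omega_j = -\gamma(a_i-a_j) + (\text{forcing difference})$, which is what allows the whole argument to close at the level of first-order Gronwall estimates rather than second-order ones.
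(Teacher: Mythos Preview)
Your overall two-stage plan and the shape of $\mathcal{E}_2$ match the paper, but your first functional $\mathcal{E}_1$ does not close, and this is a genuine gap rather than a detail. You propose to combine $\dot D_\theta \le D_\omega$ with the first-order inequality $\gamma\dot D_\omega + D_\omega \le D_\Omega + 2K\psi_u\sin\bar\alpha + 2K\psi_u D_\theta$ obtained at the $\omega$-extremal pair. But at those indices the sine coupling carries no sign, so the $D_\theta$ contribution on the right is \emph{positive}; a direct computation with your $\mathcal{E}_1$ gives
\[
\dot{\mathcal{E}}_1 \;\le\; (4\gamma K\psi_u - 2)\,D_\omega \;+\; 6K\psi_u\, D_\theta \;+\; 3(D_\Omega + 2K\psi_u\sin\bar\alpha),
\]
and no choice of coefficients in a functional built from $D_\theta$ and $D_\omega$ alone can produce a negative $D_\theta$ term. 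The only place such a term appears is in the \emph{second-order} inequality $\gamma\ddot D_\theta + \dot D_\theta \le (\text{source}) - \tfrac{K\mathcal{C}\cos\bar\alpha\sin\beta}{N\beta}D_\theta$, derived at the $\theta$-extremal pair; to turn this into a first-order Gronwall estimate one must absorb $\gamma\ddot D_\theta$, and that forces a $\gamma^2 D_a$ term into the functional --- exactly the mechanism you yourself invoke one level up for $D_b$ in $\mathcal{E}_2$. The paper's first energy is accordingly $E_1 = D_\theta + c\,\gamma D_\omega + 2\gamma^2 D_a$; the expression on the left of \eqref{initial_con} that you took as a template is not $E_1$ itself but only the upper bound on $E_1(0)$ obtained \emph{after} eliminating $D_a(0)$ via the equation.

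There is a related slip in your $\mathcal{E}_2$ paragraph: the ``key dissipation input'' you display is the $\sin$-coupling at the $\omega$-extremal indices, bounded by $-\tfrac{K\mathcal{C}\cos(D^\infty+\bar\alpha)}{N}D_\theta$. That estimate is false (those indices need not be $\theta$-extremal, so the sine sum has no sign there), and in any case it would yield decay of $D_\theta$, not of $D_\omega$. The actual frequency dissipation comes from the once-differentiated system $\gamma\ddot\omega_i + \dot\omega_i = \tfrac{K}{N}\sum_k\tfrac{\psi_{ik}}{d_i}\cos(\theta_k-\theta_i+\alpha_{ik})(\omega_k-\omega_i)$: applying the depth-two combinatorics with $\mathcal{C}$ to the $\cos$-weights at the $\omega$-extremal pair yields $\gamma\ddot D_\omega + \dot D_\omega \le -\tfrac{K\mathcal{C}\cos(D^\infty+\bar\alpha)}{N}D_\omega$, and then $D_a$ and $D_b$ enter $E_2$ to absorb $\gamma\ddot D_\omega$, in exact parallel with the first stage.
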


\section{Proof of the Main Result}\label{sec:3}
\setcounter{equation}{0}

In this section, we will address the synchronization problem of system \eqref{second_theta_system} under the network interaction \eqref{com_net}, and provide the detailed proof of Theorem \ref{com_main}. Our approach relies on two novel energy functions, which govern the dynamics of the phase diameter and frequency diameter, respectively. 

\subsection{Phase cohesiveness}\label{sec:3.1} 
In this part, we will show that all oscillators will concentrate into an arc of a quarter circle in finite time.
To set up the stage, we define a crucial energy function
\begin{equation}\label{energy_1}
\begin{aligned}
E_1(t) = D_\theta(t) + \frac{\mathcal{C}\cos \bar{\alpha}\sin \beta}{4N\psi_u \beta }\gamma D_\omega(t) + 2\gamma^2 D_a(t),
\end{aligned}
\end{equation}
Moreover, due to the analyticity of system \eqref{s_phs_com}, we can divide the whole time interval into countably many intervals such as
\begin{equation}\label{com_time_divide}
[0,+\infty) = \bigcup_{l=1}^{+\infty} J_l, \quad J_l = [t_{l-1},t_l), \quad t_0 = 0,
\end{equation}
such that in each time interval $J_l$, the orders of oscillators' phases, frequencies and accelerations are unchanged. The following lemma shows an a priori estimate on the initial energy.

\begin{lemma}\label{com_ini_esti}
Let $(\theta(t), \omega(t))$ be a solution to system \eqref{second_theta_system} with the network structure \eqref{com_net}, and suppose the initial data satisfy the condition \eqref{initial_con}.
Then, we have
\begin{equation*}
E_1(0) <\beta  < \pi.
\end{equation*}
\end{lemma}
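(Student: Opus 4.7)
The idea is to bound each of the three terms in $E_1(0)$ by quantities that already appear on the left-hand side of the initial hypothesis \eqref{initial_con}. Since $D_\theta(0)$ and $D_\omega(0)$ are already present there, the only nontrivial step is to control $\gamma^2 D_a(0)$.

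First I would use \eqref{second_theta_system} at $t=0$ to write
\begin{equation*}
\gamma a_i(0) = -\omega_i(0) + \frac{\Omega_i}{d_i} + \frac{K}{N}\sum_{k=1}^N \frac{\psi_{ik}}{d_i}\sin\bigl(\theta_k(0) - \theta_i(0) + \alpha_{ik}\bigr),
\end{equation*}
and subtract this identity at the indices $M, m$ where $a(0)$ attains its maximum and minimum, so that the left-hand side equals $\gamma D_a(0)$. The frequency difference on the right is bounded by $D_\omega(0)$ and the natural-frequency difference by $D_\Omega$.

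For the coupling sum I would expand
\begin{equation*}
\sin(\theta_k-\theta_i+\alpha_{ik}) = \sin(\theta_k-\theta_i)\cos\alpha_{ik} + \cos(\theta_k-\theta_i)\sin\alpha_{ik},
\end{equation*}
giving $|\sin(\theta_k-\theta_i+\alpha_{ik})| \leq D_\theta(0) + \sin\bar\alpha$ for each pair $i,k$ at $t=0$. Combined with $\sum_{k}\psi_{ik}/d_i \leq N\psi_u$ (and similarly for the $m$-th row) this yields
\begin{equation*}
\gamma D_a(0) \leq D_\omega(0) + D_\Omega + 2K\psi_u\bigl(D_\theta(0) + \sin\bar\alpha\bigr).
\end{equation*}

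The last ingredient is the elementary bound $\dfrac{\mathcal{C}\cos\bar\alpha\sin\beta}{4N\psi_u\beta} \leq 1$, which follows because $\mathcal{C}$ is the minimum over pairs of a sum of $N$ terms each at most $\psi_u$ (hence $\mathcal{C} \leq N\psi_u$), together with $\cos\bar\alpha \leq 1$ and $\sin\beta \leq \beta$. Substituting the estimate on $\gamma^2 D_a(0)$ into $E_1(0)$ and collecting terms, the coefficient of $\gamma D_\omega(0)$ becomes at most $\frac{\mathcal{C}\cos\bar\alpha\sin\beta}{4N\psi_u\beta} + 2 \leq 3$, and the remaining terms match \eqref{initial_con} exactly:
\begin{equation*}
E_1(0) \leq 2\gamma(D_\Omega + 2K\psi_u\sin\bar\alpha) + (1 + 4\gamma K\psi_u)D_\theta(0) + 3\gamma D_\omega(0) < \beta.
\end{equation*}

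I do not anticipate a serious obstacle; the only delicate step is the trigonometric expansion that separates the phase difference from the frustration angle, since otherwise the sine bound could not be matched to the shape of \eqref{initial_con}. Everything else is linear book-keeping of the three terms of $E_1(0)$.
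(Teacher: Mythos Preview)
Your proposal is correct and follows essentially the same route as the paper: both use the system at $t=0$ to bound $\gamma D_a(0)$ by $D_\omega(0) + D_\Omega + 2K\psi_u(D_\theta(0) + \sin\bar\alpha)$ via the angle-addition expansion, then invoke $\dfrac{\mathcal{C}\cos\bar\alpha\sin\beta}{4N\psi_u\beta}\le 1$ and collect terms to recover exactly the left-hand side of \eqref{initial_con}. Your explicit justification of $\mathcal{C}\le N\psi_u$ is a small addition the paper leaves implicit, but otherwise the arguments are identical.
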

\begin{proof}
Note that $\omega_i(t) = \dot{\theta}_i(t)$ and $a_i(t) = \dot{\omega}_i(t)$, then we see from \eqref{second_theta_system} that
\begin{equation*}
\gamma a_i(0) + \omega_i(0) = \frac{\Omega_i}{d_i}  + \frac{K}{N} \sum_{k=1}^N \frac{\psi_{ik}}{d_i} \sin (\theta_k(0) - \theta_i(0) + \alpha_{ik}),
\end{equation*}
which implies 
\begin{equation}\label{a_initial}
a_i(0) = \frac{1}{\gamma} \left[- \omega_i(0) + \frac{\Omega_i}{d_i}  + \frac{K}{N} \sum_{k=1}^N \frac{\psi_{ik}}{d_i} \sin (\theta_k(0) - \theta_i(0) + \alpha_{ik}) \right].
\end{equation}
It is easy to see that we can find some indices $i,j$ such that
\begin{equation*}
D_a(0) = a_i(0) - a_j(0).
\end{equation*}
Then, direct calculations show that
\begin{equation*}
\begin{aligned}
D_a(0) &= -\frac{1}{\gamma} (\omega_i(0) - \omega_j(0)) + \frac{1}{\gamma} \left(\frac{\Omega_i}{d_i} - \frac{\Omega_j}{d_j}\right) \\
&\quad+ \frac{K}{\gamma N} \sum_{k=1}^N \frac{\psi_{ik}}{d_i} \sin (\theta_k(0) - \theta_i(0) + \alpha_{ik}) - \frac{K}{\gamma N} \sum_{k=1}^N \frac{\psi_{jk}}{d_j} \sin (\theta_k(0) - \theta_j(0) + \alpha_{jk})\\
&\le \frac{1}{\gamma} D_\omega(0) + \frac{1}{\gamma}D_\Omega + \frac{2K\psi_u }{\gamma} D_\theta(0) + \frac{2K \psi_u \sin \bar{\alpha}}{\gamma}.
\end{aligned}
\end{equation*}
Here, we used the following relations
\begin{equation*}
\sin (x+y) = \sin x \cos y + \cos x \sin y, \quad |\sin x| \le |x|, \quad \cos \alpha_{ik} \le 1, \quad \sin \alpha_{ik} \le \sin \bar{\alpha}.
\end{equation*}
Therefore, together with the above estimate on $D_a(0)$, we derive
\begin{equation*}
\begin{aligned}
&E_1(0) = D_\theta(0) + \frac{\mathcal{C}\cos \bar{\alpha}\sin \beta}{4N\psi_u\beta }\gamma D_\omega(0) + 2\gamma^2 D_a(0)\\
&\le D_\theta(0) + \gamma D_\omega(0) + 2\gamma^2 \left(\frac{1}{\gamma} D_\omega(0) + \frac{1}{\gamma}D_\Omega + \frac{2K\psi_u }{\gamma} D_\theta(0) + \frac{2K \psi_u \sin \bar{\alpha}}{\gamma} \right)\\
&= 2\gamma (D_\Omega+ 2K \psi_u \sin \bar{\alpha}) + (1 + 4\gamma K \psi_u ) D_\theta(0) + 3\gamma D_\omega(0) < \beta  < \pi.
\end{aligned}
\end{equation*}
\end{proof}

Next, according to \eqref{com_time_divide}, we may provide a second-order differential inequality of phase diameter function $D_\theta(t)$  on each time interval $J_l$. Then we have the following lemma.
\begin{lemma}\label{second_theta_eq}
Let $\theta(t)$ be a solution to system \eqref{second_theta_system} with the network structure \eqref{com_net}, and suppose
\begin{equation*}
D_\theta(t) < \beta < \pi, \quad \text{for} \ t \in J_l,
\end{equation*}
where $J_l$ is some time interval defined in \eqref{com_time_divide}.
Then, we have
\begin{equation}\label{phsdia_s_dynamic}
\begin{aligned}
\gamma \ddot{D}_\theta(t) + \dot{D}_\theta(t)  \le D_\Omega + 2K \psi_u\sin\bar{\alpha} - \frac{K\mathcal{C}  \cos \bar{\alpha} \sin \beta}{N\beta} D_\theta(t), \quad t \in J_l.
\end{aligned} 
\end{equation}
\end{lemma}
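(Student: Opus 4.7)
The plan is to work on a single analytic interval $J_l$ from the partition \eqref{com_time_divide}, so that the orderings of phases are fixed and there exist indices $M$ and $m$ (independent of $t$ on $J_l$) with $D_\theta(t)=\theta_M(t)-\theta_m(t)$. Then $\dot D_\theta=\omega_M-\omega_m$ and $\ddot D_\theta=a_M-a_m$ classically on $J_l$, and subtracting \eqref{second_theta_system} for $i=M$ and $i=m$ gives
\begin{align*}
\gamma\ddot D_\theta+\dot D_\theta=\Big(\tfrac{\Omega_M}{d_M}-\tfrac{\Omega_m}{d_m}\Big)+\tfrac{K}{N}\sum_{k=1}^N\Big[\tfrac{\psi_{Mk}}{d_M}\sin(\theta_k-\theta_M+\alpha_{Mk})-\tfrac{\psi_{mk}}{d_m}\sin(\theta_k-\theta_m+\alpha_{mk})\Big].
\end{align*}
The natural-frequency term is bounded by $D_\Omega$. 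I would then use the addition formula $\sin(u+\alpha)=\sin u\cos\alpha+\cos u\sin\alpha$ to split each coupling term into a \emph{sine--cosine} piece (which will give dissipation) and a \emph{cosine--sine} piece (absorbed into a frustration error). Using $|\cos|\le 1$ and $\sin\alpha_{ik}\le\sin\bar\alpha$, the cosine--sine piece is uniformly dominated by $\tfrac{K}{N}\cdot 2N\psi_u\sin\bar\alpha=2K\psi_u\sin\bar\alpha$.

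For the sine--cosine piece, I would exploit the signs dictated by the ordering: since $D_\theta<\pi$, one has $\sin(\theta_k-\theta_M)\le 0$ and $\sin(\theta_k-\theta_m)\ge 0$, so that replacing each $\cos\alpha_{ik}$ by its lower bound $\cos\bar\alpha>0$ only weakens the upper bound. I would then split the sum over $k\in\{M,m\}$ from $k\notin\{M,m\}$: the self-interaction term $k=i$ vanishes because $\alpha_{ii}=0$, so the $k\in\{M,m\}$ contribution reduces to $-(\psi_{Mm}/d_M+\psi_{mM}/d_m)\cos\bar\alpha\sin D_\theta$. For $k\notin\{M,m\}$, since the two sine factors have opposite signs, the elementary observation that scaling a non-positive (resp.\ non-negative) number by a smaller positive weight raises (resp.\ lowers) it yields
\begin{align*}
\tfrac{\psi_{Mk}}{d_M}\sin(\theta_k-\theta_M)-\tfrac{\psi_{mk}}{d_m}\sin(\theta_k-\theta_m)\le\min\!\left\{\tfrac{\psi_{Mk}}{d_M},\tfrac{\psi_{mk}}{d_m}\right\}\bigl[\sin(\theta_k-\theta_M)-\sin(\theta_k-\theta_m)\bigr].
\end{align*}

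To turn the bracket into $-\sin D_\theta$, I would invoke the concavity of $\sin$ on $[0,\pi]$: since $\sin(x)/x$ is decreasing on $(0,\pi]$, $\sin(\theta_k-\theta_m)\ge\tfrac{\sin D_\theta}{D_\theta}(\theta_k-\theta_m)$ and $-\sin(\theta_k-\theta_M)=\sin(\theta_M-\theta_k)\ge\tfrac{\sin D_\theta}{D_\theta}(\theta_M-\theta_k)$; adding produces $\sin(\theta_k-\theta_M)-\sin(\theta_k-\theta_m)\le-\sin D_\theta$. Summing over all $k$, the total sine--cosine contribution is bounded by
\begin{align*}
-\tfrac{K\cos\bar\alpha\sin D_\theta}{N}\Big[\tfrac{\psi_{Mm}}{d_M}+\tfrac{\psi_{mM}}{d_m}+\sum_{k\ne M,m}\min\!\big\{\tfrac{\psi_{Mk}}{d_M},\tfrac{\psi_{mk}}{d_m}\big\}\Big]\le -\tfrac{K\mathcal{C}\cos\bar\alpha}{N}\sin D_\theta,
\end{align*}
where the recognition of the bracketed expression as (a lower bound for) the network constant $\mathcal{C}$ from \eqref{com_net} is exactly where the depth-two hypothesis enters. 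A final application of concavity, $\sin D_\theta\ge \tfrac{\sin\beta}{\beta}D_\theta$ for $D_\theta\in[0,\beta]\subset[0,\pi)$, linearizes the dissipative term and delivers \eqref{phsdia_s_dynamic}.

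The main technical obstacle I anticipate is the bookkeeping that assembles the constant $\mathcal{C}$: tracking signs across the splitting into ``$k\in\{M,m\}$'' and ``$k\notin\{M,m\}$'', verifying that each replacement (of $\cos\alpha_{ik}$ by $\cos\bar\alpha$, or of the weights by their min) preserves the direction of inequality, and checking that the diagonal pieces $\psi_{Mm}/d_M+\psi_{mM}/d_m$ dovetail with the off-diagonal $\min$ terms into precisely the defining expression of $\mathcal{C}$. Everything else is routine trigonometric estimation.
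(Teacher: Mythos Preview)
Your proposal is correct and follows essentially the same route as the paper's proof: fix extremal indices on $J_l$, subtract the two equations, split each sine via the addition formula into a frustration error bounded by $2K\psi_u\sin\bar\alpha$ and a dissipative part, replace $\cos\alpha_{ik}$ by $\cos\bar\alpha$ using the sign of $\sin(\theta_k-\theta_M)$ and $\sin(\theta_k-\theta_m)$, pass to $\min\{\psi_{Mk}/d_M,\psi_{mk}/d_m\}$ for $k\notin\{M,m\}$, assemble the constant $\mathcal{C}$, and finally linearize via $\sin x/x$ decreasing. If anything, you are more explicit than the paper in two places: the sign justification for the $\cos\bar\alpha$ replacement, and the concavity step $\sin(\theta_M-\theta_k)+\sin(\theta_k-\theta_m)\ge\sin D_\theta$, which the paper states without argument.
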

\begin{proof}
For $t \in J_l$, we can find some indices $i$ and $j$ such that
\begin{equation*}
D_\theta(t) = \theta_i(t) - \theta_j(t).
\end{equation*}
It follows from \eqref{second_theta_system} that
\begin{equation*}
\begin{aligned}
&\gamma(\ddot{\theta}_i(t) - \ddot{\theta}_j(t)) + (\dot{\theta}_i(t) - \dot{\theta}_j(t)) \\
&= \frac{\Omega_i}{d_i} - \frac{\Omega_j}{d_j} + \frac{K}{N} \sum_{k=1}^N \frac{\psi_{ik}}{d_i}[\sin(\theta_k - \theta_i) \cos \alpha_{ik} + \cos(\theta_k - \theta_i) \sin \alpha_{ik}]\\
&\quad - \frac{K}{N} \sum_{k=1}^N \frac{\psi_{jk}}{d_j} [\sin (\theta_k - \theta_j) \cos \alpha_{jk} + \cos (\theta_k - \theta_j) \sin \alpha_{jk}]\\
&\le D_\Omega + 2K\psi_u \sin \bar{\alpha} + \frac{K\cos \bar{\alpha}}{N} \sum_{k=1}^N \left(\frac{\psi_{ik}}{d_i} \sin (\theta_k- \theta_i) -  \frac{\psi_{jk}}{d_j} \sin (\theta_k - \theta_j)\right).
\end{aligned}
\end{equation*}
where we utilized the relations
\begin{equation*}
 0 \le \sin \alpha_{ik} \le \sin \bar{\alpha}, \quad \cos \alpha_{ik} > \cos \bar{\alpha} > 0.
\end{equation*}
Furthermore, we deal with the last term in above formula
\begin{equation*}
\begin{aligned}
&\sum_{k=1}^N \left(\frac{\psi_{ik}}{d_i} \sin (\theta_k- \theta_i) -  \frac{\psi_{jk}}{d_j} \sin (\theta_k - \theta_j)\right)\\
&= - \left(\frac{\psi_{ij}}{d_i} + \frac{\psi_{ji}}{d_j}\right) \sin (\theta_i - \theta_j) - \sum_{\substack{k = 1 \\ k \ne i,j}}^N \min \left\{\frac{\psi_{ik}}{d_i}, \frac{\psi_{jk}}{d_j}\right\} [\sin (\theta_i - \theta_k) + \sin (\theta_k - \theta_j)]\\
&\le - \mathcal{C} \sin D_\theta(t),
\end{aligned}
\end{equation*}
where $\mathcal{C}$ is defined in \eqref{com_net}. Then, we collect all above estimates to derive
\begin{equation*}
\begin{aligned}
\gamma \ddot{D}_\theta(t) + \dot{D}_\theta(t) &\le D_\Omega + 2K\psi_u \sin \bar{\alpha} - \frac{K\mathcal{C}  \cos \bar{\alpha}}{N}\sin D_\theta(t)\\
&\le D_\Omega + 2K \psi_u\sin\bar{\alpha} - \frac{K\mathcal{C}  \cos \bar{\alpha} \sin \beta}{N\beta} D_\theta(t), \quad t \in J_l.
\end{aligned}
\end{equation*}
where we leveraged the monotone decreasing property of $\frac{\sin x}{x}$ in $(0, \pi]$. 
\end{proof}

It is worth mentioning that the estimate on the second-order dynamics of $D_\theta(t)$ in Lemma \ref{second_theta_eq} does not suffice to show the phase cohesiveness. The reason arises from the singularities of second-order derivative of $D_\theta(t)$ in view of the whole time line. Hence, we next provide an estimate on the dynamics of $D_a(t)$, which can control the term $\ddot{D}_\theta(t)$ in some degree.

To this end, recall $\omega_i(t) = \dot{\theta}_i(t)$ and it yields from \eqref{second_theta_system} that
\begin{equation}\label{first_fre}
\gamma \dot{\omega}_i(t) + \omega_i(t) = \frac{\Omega_i}{d_i} + \frac{K}{N} \sum_{j=1}^N \frac{\psi_{ij}}{d_i}\sin (\theta_j(t) - \theta_i(t)+ \alpha_{ij}), \quad t > 0,  \ i=1,2,\ldots,N.
\end{equation}
Directly differentiating system \eqref{first_fre}, we obtain the second-order frequency model
\begin{equation}\label{inertia_fre}
\gamma \ddot{\omega}_i(t) + \dot{\omega}_i(t) = \frac{K}{N} \sum_{j=1}^N \frac{\psi_{ij}}{d_i} \cos (\theta_j(t) - \theta_i(t) + \alpha_{ij})(\omega_j(t) - \omega_i(t)), \quad t \ge 0,  \ i=1,2,\ldots,N.
\end{equation}
Moreover, according to $a_i(t) = \dot{\omega}_i(t)$, we have
\begin{equation}\label{first_acce}
\gamma \dot{a}_i(t) + a_i(t) =  \frac{K}{N} \sum_{j=1}^N \frac{\psi_{ij}}{d_i} \cos (\theta_j(t) - \theta_i(t) + \alpha_{ij})(\omega_j(t) - \omega_i(t)), \quad t \ge 0,  \ i=1,2,\ldots,N.
\end{equation}

\begin{lemma}\label{first_a_eq}
Let $(\theta(t),\omega(t))$ be a solution to system \eqref{second_theta_system} with the network structure \eqref{com_net}. Then, we have
\begin{equation}\label{accedia_f_dynamics}
\begin{aligned}
\gamma\dot{D}_a(t) + D_a(t) \le 2K \psi_uD_\omega(t), \quad t \in J_i, \ i=1,2,\ldots,
\end{aligned}
\end{equation}
where $J_i$ is defined in \eqref{com_time_divide}.
\end{lemma}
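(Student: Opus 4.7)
The plan is to treat this as a direct estimation on each time interval $J_l$ where the ordering of accelerations is fixed. On such an interval, the functions $\max_i a_i(t)$ and $\min_i a_i(t)$ are just $a_M(t)$ and $a_m(t)$ for fixed indices $M,m$, so $D_a(t)=a_M(t)-a_m(t)$ is smooth and one may freely differentiate.

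First I would apply the evolution equation \eqref{first_acce} to both $a_M$ and $a_m$ and subtract to obtain the identity
\begin{equation*}
\gamma\dot{D}_a(t)+D_a(t) = \frac{K}{N}\sum_{j=1}^N\left[\frac{\psi_{Mj}}{d_M}\cos(\theta_j-\theta_M+\alpha_{Mj})(\omega_j-\omega_M) - \frac{\psi_{mj}}{d_m}\cos(\theta_j-\theta_m+\alpha_{mj})(\omega_j-\omega_m)\right].
\end{equation*}
Then I would bound the right-hand side term by term using the three elementary inequalities $|\cos(\cdot)|\le 1$, $\psi_{ij}/d_i\le \psi_u$ (from the definition of $\psi_u$), and $|\omega_j-\omega_i|\le D_\omega(t)$. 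Each of the two summations contributes at most $K\psi_u D_\omega(t)$ after telescoping the factor of $N$ with the prefactor $1/N$, which gives the claimed bound $2K\psi_u D_\omega(t)$.

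There is essentially no obstacle here: unlike the phase-diameter inequality in Lemma~\ref{second_theta_eq}, no $\mathcal{C}$-type graph quantity or cancellation needs to be extracted, because we are not trying to exhibit dissipation but merely to control the growth of $D_a$ by $D_\omega$. The only technical point worth mentioning is that this estimate is pointwise valid on each $J_l$, and I would note that the absolute value of $\gamma\dot{D}_a+D_a$ is bounded by $2K\psi_u D_\omega$, which in particular yields the one-sided inequality stated in the lemma; this is what will later combine with the estimates on $D_\theta$ and $D_\omega$ when one assembles the energy function $E_1$ in \eqref{energy_1}.
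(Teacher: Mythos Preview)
Your proposal is correct and follows essentially the same route as the paper: fix the extremal indices on $J_l$, subtract the two instances of \eqref{first_acce}, and bound the right-hand side using $|\cos(\cdot)|\le 1$, $\psi_{ij}/d_i\le\psi_u$, and $|\omega_j-\omega_i|\le D_\omega(t)$. Your additional remark that the two-sided bound holds is true but not needed; the paper records only the one-sided inequality.
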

\begin{proof}
For any fixed time interval $J_i$ and any $t \in J_i$ with $i=1,2,\ldots$,, we can find some indices $k$ and $l$ such that
\begin{equation}\label{E-2}
D_a(t) = a_k(t) - a_l(t).
\end{equation}
It is easy to see from \eqref{first_acce} that
\begin{equation*}
\begin{aligned}
&\gamma(\dot{a}_k(t) - \dot{a}_l(t)) + (a_k(t) - a_l(t)) \\
&= \frac{K}{N} \sum_{j=1}^N \frac{\psi_{kj}}{d_k} \cos (\theta_j - \theta_k + \alpha_{kj})(\omega_j - \omega_k)- \frac{K}{N} \sum_{j=1}^N \frac{\psi_{lj}}{d_l} \cos (\theta_j - \theta_l + \alpha_{lj})(\omega_j - \omega_l).
\end{aligned}
\end{equation*}
Then from \eqref{E-2}, we use $|\cos x| \le 1$ to get
\begin{equation*}
\begin{aligned}
\gamma\dot{D}_a(t) + D_a(t) \le \frac{K}{N} \sum_{j=1}^N \psi_u|\omega_j - \omega_k| + \frac{K}{N}\sum_{j=1}^N \psi_u |\omega_j - \omega_l| \le 2K \psi_uD_\omega(t), \quad t \in J_i.
\end{aligned}
\end{equation*}
Owing to the arbitrary choice of $J_i$, we complete the proof.
\end{proof}

To control the term $\ddot{D}_\theta(t)$, we deduce the estimate on the dynamics of $D_a(t)$. However, there is one more non-dissipative term $D_\omega(t)$ displayed in Lemma \ref{first_a_eq}. Thus, we subsequently analyze the dynamics of $D_\omega(t)$.

\begin{lemma}\label{first_w_differential}
Let $(\theta(t),\omega(t))$ be a solution to system \eqref{second_theta_system} with the network structure \eqref{com_net}. Then, we have 
\begin{equation}\label{fredia_f_dynamics}
\begin{aligned}
\gamma\dot{D}_\omega(t) + D_\omega(t) &\le D_\Omega +  2K \psi_u \sin \bar{\alpha} + 2K \psi_u  D_\theta(t), \quad t \in J_l, \ l =1,2,\ldots,
\end{aligned}
\end{equation}
where $J_l$ is defined in \eqref{com_time_divide}.
\end{lemma}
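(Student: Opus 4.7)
The plan is to mirror the argument used for Lemma \ref{first_a_eq}, but starting from the first-order frequency equation \eqref{first_fre} instead of the acceleration equation \eqref{first_acce}. On a fixed interval $J_l$ the ordering of the $\omega_i$ is frozen, so I can pick indices $i,j$ (depending on $l$) with $D_\omega(t) = \omega_i(t) - \omega_j(t)$ throughout $J_l$, and then $\dot D_\omega(t) = \dot\omega_i(t) - \dot\omega_j(t)$ pointwise there. Subtracting the $j$-th copy of \eqref{first_fre} from the $i$-th yields
\begin{equation*}
\gamma \dot D_\omega(t) + D_\omega(t) = \Big(\tfrac{\Omega_i}{d_i} - \tfrac{\Omega_j}{d_j}\Big) + \tfrac{K}{N}\sum_{k=1}^N \Big[\tfrac{\psi_{ik}}{d_i}\sin(\theta_k - \theta_i + \alpha_{ik}) - \tfrac{\psi_{jk}}{d_j}\sin(\theta_k - \theta_j + \alpha_{jk})\Big].
\end{equation*}

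Next I would split each sine via $\sin(x+y)=\sin x\cos y+\cos x\sin y$, exactly as in the proofs of Lemma \ref{com_ini_esti} and Lemma \ref{second_theta_eq}. The frustration part is controlled with $|\cos(\theta_k-\theta_i)|\le 1$, $|\sin\alpha_{ik}|\le \sin\bar\alpha$ and $\tfrac{\psi_{ik}}{d_i},\tfrac{\psi_{jk}}{d_j}\le \psi_u$, contributing at most $2K\psi_u\sin\bar\alpha$ after summing over $k$ and dividing by $N$. For the non-frustration part I bound $\cos\alpha_{ik},\cos\alpha_{jk}\le 1$ and then use $|\sin(\theta_k-\theta_i)|\le |\theta_k-\theta_i|\le D_\theta(t)$, which gives a contribution bounded by $2K\psi_u D_\theta(t)$. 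The natural frequency difference is at most $D_\Omega$ by definition.

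Combining these three bounds yields the desired inequality on $J_l$; since $l$ was arbitrary the inequality holds on every interval of the decomposition \eqref{com_time_divide}. I do not expect a real obstacle here: the argument is a direct trigonometric estimate strictly parallel to the one carried out for $D_a$ in Lemma \ref{first_a_eq}, the only substantive difference being that the right-hand side of \eqref{first_fre} contains the inhomogeneous terms $\Omega_i/d_i$ and the $\sin\alpha_{ik}$ contributions, which is why the bound now features the additional $D_\Omega$ and $2K\psi_u\sin\bar\alpha$ terms instead of being purely proportional to $D_\omega$.
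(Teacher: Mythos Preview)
Your proposal is correct and follows essentially the same route as the paper's proof: fix $J_l$, pick extremal indices for $D_\omega$, subtract the corresponding copies of \eqref{first_fre}, expand $\sin(\theta_k-\theta_\cdot+\alpha_{\cdot k})$ via the addition formula, and bound the frustration and non-frustration parts separately using $|\cos|\le 1$, $\sin\alpha_{\cdot k}\le\sin\bar\alpha$, $\tfrac{\psi_{\cdot k}}{d_\cdot}\le\psi_u$, and $|\sin(\theta_k-\theta_\cdot)|\le|\theta_k-\theta_\cdot|\le D_\theta(t)$. The only cosmetic difference is that the paper labels the extremal indices $p,q$ rather than $i,j$.
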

\begin{proof}
For any fixed time interval $J_l$ and any $t \in J_l$ with $l =1,2,\ldots$, we can find some indices $p$ and $q$ such that
\begin{equation}\label{E-1}
D_\omega(t) = \omega_p(t) - \omega_q(t).
\end{equation}
We see from \eqref{first_fre} that
\begin{equation*}
\begin{aligned}
&\gamma (\dot{\omega}_p(t) - \dot{\omega}_q(t)) + \omega_p(t) - \omega_q(t) \\
&= \frac{\Omega_p}{d_p} - \frac{\Omega_q}{d_q} + \frac{K}{N} \sum_{k=1}^N \frac{\psi_{pk}}{d_p}[\sin (\theta_k - \theta_p) \cos \alpha_{pk} +  \cos (\theta_k - \theta_p) \sin \alpha_{pk}]\\
&\quad - \frac{K}{N} \sum_{k=1}^N \frac{\psi_{qk}}{d_q} [\sin (\theta_k - \theta_q) \cos \alpha_{qk} + \cos (\theta_k - \theta_q) \sin \alpha_{qk}].\\
\end{aligned}
\end{equation*}
Then from \eqref{E-1}, we apply $|\sin x| \le |x|$ to obtain
\begin{equation*}
\begin{aligned}
&\gamma\dot{D}_\omega(t) + D_\omega(t) \\
&\le D_\Omega + 2K \psi_u \sin \bar{\alpha} +  \frac{K}{N}\sum_{k=1}^N \frac{\psi_{pk}}{d_p} |\sin (\theta_k - \theta_p)| + \frac{K }{N} \sum_{k=1}^N \frac{\psi_{qk}}{d_q} |\sin (\theta_k - \theta_q)| \\
&\le D_\Omega +  2K \psi_u \sin \bar{\alpha} + 2K \psi_u  D_\theta(t), \quad t \in J_l. 
\end{aligned}
\end{equation*}
Due to the arbitrary choice of $J_l$, the proof is completed.
\end{proof}

Now, we are ready to present a Gronwall type inequality on the energy function $E_1(t)$.

\begin{lemma}\label{Lm3-5}
Let $(\theta(t),\omega(t))$ be a solution to system \eqref{second_theta_system} with the network structure \eqref{com_net}, and suppose the assumptions \eqref{initial_con}, \eqref{gammaK_con} and \eqref{K_con} are fulfilled. Then, we have
\begin{equation}\label{phs_functional_dynamics}
\begin{aligned}
\frac{d}{dt} E_1(t) \le 2 \left(D_\Omega + 2K \psi_u\sin\bar{\alpha} \right)  - \frac{K\mathcal{C}  \cos \bar{\alpha} \sin \beta}{2N\beta} E_1(t), \quad \text{a.e.}\ t \ge 0.
\end{aligned}
\end{equation}
\end{lemma}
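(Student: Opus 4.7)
The plan is to differentiate $E_1(t)$ on each of the analyticity intervals $J_l$ from \eqref{com_time_divide}, on which the orders of $\theta_i$, $\omega_i$, $a_i$ remain constant, and then assemble the three differential inequalities of Lemmas~\ref{second_theta_eq}--\ref{first_w_differential} into the desired first-order Gronwall bound. The weights $1$, $c_1\gamma$ with $c_1 := \frac{\mathcal{C}\cos\bar{\alpha}\sin\beta}{4N\psi_u\beta}$, and $2\gamma^2$ appearing in the definition \eqref{energy_1} of $E_1$ are engineered so that the cross terms produced by differentiation combine to yield exactly the target dissipation rate $\kappa := \frac{K\mathcal{C}\cos\bar{\alpha}\sin\beta}{2N\beta}$ in front of $E_1$.

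The main obstacle is that the estimate \eqref{phsdia_s_dynamic} from Lemma~\ref{second_theta_eq} is \emph{second}-order in $D_\theta$, whereas $E_1$ contains $D_\theta$ but not $\dot D_\theta$. The way around this is to observe that on each $J_l$ the extremizing indices $i_\ast, j_\ast$ of $\theta$ are fixed, so $\ddot D_\theta(t) = a_{i_\ast}(t) - a_{j_\ast}(t) \ge -D_a(t)$. Moving the $\gamma \ddot D_\theta$ term in \eqref{phsdia_s_dynamic} to the right-hand side upgrades it to the first-order bound
\begin{equation*}
\dot D_\theta \;\le\; \gamma D_a + D_\Omega + 2K\psi_u \sin\bar{\alpha} - \frac{K\mathcal{C}\cos\bar{\alpha}\sin\beta}{N\beta}\, D_\theta,
\end{equation*}
which can now be linearly combined with Lemmas~\ref{first_a_eq} and~\ref{first_w_differential}.

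Multiplying Lemma~\ref{first_w_differential} by $c_1$ and Lemma~\ref{first_a_eq} by $2\gamma$, and adding everything to the displayed bound produces
\begin{equation*}
\dot E_1 \;\le\; (1+c_1)\bigl(D_\Omega + 2K\psi_u\sin\bar{\alpha}\bigr) - \Bigl(\tfrac{K\mathcal{C}\cos\bar{\alpha}\sin\beta}{N\beta} - 2c_1 K\psi_u\Bigr) D_\theta - \bigl(c_1 - 4\gamma K\psi_u\bigr) D_\omega - \gamma D_a.
\end{equation*}
With the chosen $c_1$, the coefficient of $D_\theta$ collapses to exactly $-\kappa$. To match the remaining $D_\omega$ and $D_a$ terms against the corresponding $\kappa c_1\gamma\, D_\omega$ and $2\kappa\gamma^2 D_a$ in $\kappa E_1$, I apply the two bounds from \eqref{gammaK_con}: the first, $\gamma K < \tfrac{\mathcal{C}\cos\bar{\alpha}\sin\beta}{32N\psi_u^2\beta}$, gives $4\gamma K\psi_u \le c_1/2$ and thus $c_1 - 4\gamma K\psi_u \ge c_1/2$; the second, $\gamma K < \tfrac{N\beta}{\mathcal{C}\cos\bar{\alpha}\sin\beta}$, gives $\kappa\gamma \le 1/2$, whence $c_1/2 \ge \kappa c_1 \gamma$ and $\gamma \ge 2\kappa\gamma^2$. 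Finally, the elementary estimates $\mathcal{C} \le N\psi_u$ and $\sin\beta \le \beta$ yield $c_1 \le 1/4$, so the factor $(1+c_1)$ in the source term is absorbed into $2(D_\Omega + 2K\psi_u\sin\bar{\alpha})$.

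A short continuation remark is needed to ensure Lemma~\ref{second_theta_eq} is applicable on every $J_l$: Lemma~\ref{com_ini_esti} gives $E_1(0) < \beta$ and therefore $D_\theta(0) < \beta$; once the Gronwall inequality is established on any initial interval where $D_\theta < \beta$, condition \eqref{K_con} forces $\tfrac{2(D_\Omega + 2K\psi_u\sin\bar{\alpha})}{\kappa} < \beta$, so both $E_1(0)$ and the asymptotic upper bound coming from the Gronwall inequality lie strictly below $\beta$. Hence $D_\theta(t) < \beta$ propagates to all of $[0,+\infty)$, the hypothesis of Lemma~\ref{second_theta_eq} is preserved on every $J_l$, and \eqref{phs_functional_dynamics} holds almost everywhere.
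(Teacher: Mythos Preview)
Your proposal is correct and follows essentially the same route as the paper: the same linear combination of Lemmas~\ref{second_theta_eq}--\ref{first_w_differential} with weights $1$, $c_1\gamma$, $2\gamma^2$, the same use of $|\ddot D_\theta|\le D_a$ to convert the second-order phase estimate into a first-order one, and the same applications of \eqref{gammaK_con} and \eqref{K_con} to close the coefficients. The only difference is organizational: the paper runs the continuity argument (define $T^*=\sup\mathcal{M}$, derive a contradiction to $T^*<\infty$) up front, whereas you first derive the Gronwall inequality conditionally on $D_\theta<\beta$ and then append the continuation remark; both are valid presentations of the same proof.
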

\begin{proof}
We prove this by continuity argument and the proof is splited into the following three steps.

\noindent $\bullet$ {\bf Step 1:} First, we construct a set
\begin{equation*}
\mathcal{M} = \left\{ T> 0 \ | \ E_1(t) < \beta < \pi, \ \forall \ 0 \le t < T\right\},
\end{equation*}
where $E_1(t)$ is defined in \eqref{energy_1}.
According to Lemma \ref{com_ini_esti} and continuity of $E_1(t)$, there exists a small constant $\varepsilon_1>0$ such that
\begin{equation*}
E_1(t) <\beta  < \pi, \quad \forall \ t \in [0,\varepsilon_1).
\end{equation*}
This means $\varepsilon_1 \in \mathcal{M}$ and thus the set $\mathcal{M}$ is not empty. We define $T^* = \sup \mathcal{M}$. In the following, we will show $T^* = +\infty$. Suppose not, i.e., $T^* < +\infty$, then we have
\begin{equation}\label{EE-4}
\begin{aligned}
&E_1(t) < \beta < \pi, \ \text{for} \ t \in [0,T^*), \quad \text{and} \quad E_1(T^*) = \beta,\\
\end{aligned}
\end{equation}
which also yields
\begin{equation}\label{EE-5}
D_\theta(t) < \beta < \pi, \quad \text{for} \ 0 \le t < T^*.
\end{equation}
For the sake of discussion, we split the time interval $[0,T^*)$ into finitely many intervals as below,
\begin{equation*}
[0,T^*) = \bigcup_{l=1}^n I_l, \quad I_l = [\tau_{l-1},\tau_l), \quad \tau_0 = 0,
\end{equation*}
so that the orders of oscillators' phases, frequencies and accelerations are fixed on each time interval $I_l$, respectively. On each interval $I_l$, we can apply the same argument as in \eqref{phsdia_s_dynamic}, \eqref{fredia_f_dynamics} and \eqref{accedia_f_dynamics} to obtain that for $t \in I_l$,
\begin{align}
&\gamma \ddot{D}_\theta(t) + \dot{D}_\theta(t)  \le D_\Omega + 2K \psi_u\sin\bar{\alpha} - \frac{K\mathcal{C}  \cos \bar{\alpha} \sin \beta}{N\beta} D_\theta(t),\label{EE-1}\\
&\gamma\dot{D}_\omega(t) + D_\omega(t) \le D_\Omega +  2K \psi_u \sin \bar{\alpha} + 2K \psi_u  D_\theta(t), \label{EE-2}\\
&\gamma\dot{D}_a(t) + D_a(t) \le 2K \psi_uD_\omega(t).\label{EE-3}
\end{align}
Then, we collect \eqref{EE-1}, \eqref{EE-2} and \eqref{EE-3} to find
\begin{equation*}
\begin{aligned}
&\gamma \ddot{D}_\theta(t) + \dot{D}_\theta(t)  + \frac{\mathcal{C}\cos \bar{\alpha}\sin \beta}{4N\psi_u \beta }\gamma\dot{D}_\omega(t) + \frac{\mathcal{C}\cos \bar{\alpha}\sin \beta}{4N\psi_u \beta }D_\omega(t)+ 2\gamma^2\dot{D}_a(t) + 2\gamma D_a(t)\\
&\le D_\Omega + 2K \psi_u\sin\bar{\alpha} - \frac{K\mathcal{C}  \cos \bar{\alpha} \sin \beta}{N\beta} D_\theta(t)\\
&\quad + \frac{\mathcal{C}\cos \bar{\alpha}\sin \beta}{4N\psi_u \beta }(D_\Omega +  2K \psi_u \sin \bar{\alpha})+ \frac{K\mathcal{C}  \cos \bar{\alpha} \sin \beta}{2N\beta} D_\theta(t) + 4\gamma K \psi_uD_\omega(t), \quad t \in I_l.
\end{aligned}
\end{equation*}
This yields
\begin{equation}\label{E-5}
\begin{aligned}
&\frac{d}{dt} \left( D_\theta(t) + \frac{\mathcal{C}\cos \bar{\alpha}\sin \beta}{4N\psi_u \beta }\gamma D_\omega(t) + 2\gamma^2 D_a(t)\right)\\
&\le - \gamma \ddot{D}_\theta(t) - \frac{\mathcal{C} \cos \bar{\alpha}\sin \beta}{4N\psi_u\beta }D_\omega(t) - 2\gamma D_a(t) + D_\Omega + 2K \psi_u\sin\bar{\alpha} - \frac{K\mathcal{C}  \cos \bar{\alpha} \sin \beta}{N\beta} D_\theta(t)\\
&\quad + \frac{\mathcal{C}\cos \bar{\alpha}\sin \beta}{4N\psi_u \beta }(D_\Omega +  2K \psi_u \sin \bar{\alpha})+ \frac{K\mathcal{C}  \cos \bar{\alpha} \sin \beta}{2N\beta} D_\theta(t) + 4\gamma K \psi_uD_\omega(t)\\
&\le 2 \left(D_\Omega + 2K \psi_u\sin\bar{\alpha} \right) -  \frac{K\mathcal{C}  \cos \bar{\alpha} \sin \beta}{2N\beta} D_\theta(t)- \left(\frac{\mathcal{C}\cos \bar{\alpha}\sin \beta}{4N\psi_u\beta } -  4\gamma K \psi_u\right)D_\omega(t) - \gamma D_a(t)\\
&\le 2 \left(D_\Omega + 2K \psi_u\sin\bar{\alpha} \right) -  \frac{K\mathcal{C}  \cos \bar{\alpha} \sin \beta}{2N\beta} D_\theta(t)- \frac{\mathcal{C}\cos \bar{\alpha}\sin \beta}{8N\psi_u\beta }  D_\omega(t) - \gamma D_a(t), \quad t \in I_l,
\end{aligned}
\end{equation}
where we used \eqref{gammaK_con} and the following relations
\begin{equation*}
\begin{aligned}
&|\ddot{D}_\theta(t)| \le  D_a(t), \quad \frac{\mathcal{C}\cos \bar{\alpha}\sin \beta}{4N\psi_u\beta } < 1,\\
& \gamma K < \frac{\mathcal{C} \cos \bar{\alpha}\sin \beta}{32N\psi_u^2 \beta} \quad \Longrightarrow \quad \frac{\mathcal{C} \cos \bar{\alpha}\sin \beta}{8N\psi_u\beta } -  4\gamma K \psi_u > 0.
\end{aligned}
\end{equation*}
We further transform \eqref{E-5} into the following 
\begin{equation*}
\begin{aligned}
&\frac{d}{dt} \left( D_\theta(t) + \frac{\mathcal{C} \cos \bar{\alpha}\sin \beta}{4N\psi_u\beta }\gamma D_\omega(t) + 2\gamma^2 D_a(t) \right)\\
&\le 2 \left(D_\Omega + 2K \psi_u\sin\bar{\alpha} \right)  - \frac{K\mathcal{C}  \cos \bar{\alpha} \sin \beta}{2N\beta} \left(D_\theta(t) + \frac{1}{4K\psi_u }D_\omega(t) + \frac{2N\beta}{K\mathcal{C}  \cos \bar{\alpha} \sin \beta} \gamma D_a(t)\right)  \\
&\le 2 \left(D_\Omega + 2K \psi_u\sin\bar{\alpha} \right)  - \frac{K\mathcal{C}  \cos \bar{\alpha} \sin \beta}{2N\beta}\left( D_\theta(t) + \frac{\mathcal{C} \cos \bar{\alpha}\sin \beta}{4N\psi_u\beta }\gamma D_\omega(t) + 2\gamma^2 D_a(t) \right) , \quad t \in I_l.
\end{aligned}
\end{equation*}
Here, we exploited the assumption \eqref{gammaK_con} yielding
\begin{equation*}
\begin{aligned}
&\gamma K < \frac{N\beta}{\mathcal{C} \cos \bar{\alpha} \sin \beta} \ \Longrightarrow \ \frac{1}{4K\psi_u} > \frac{\mathcal{C}\cos \bar{\alpha}\sin \beta}{4N\psi_u\beta }\gamma \quad \text{and} \quad \frac{2N\beta}{K\mathcal{C}  \cos \bar{\alpha} \sin \beta} > 2 \gamma. 
\end{aligned}
\end{equation*}
Thus, we conclude from the above estimate and \eqref{energy_1} that
\begin{equation}\label{E-5-1}
\begin{aligned}
\frac{d}{dt} E_1(t) \le 2 \left(D_\Omega + 2K \psi_u\sin\bar{\alpha} \right)  - \frac{K\mathcal{C}  \cos \bar{\alpha} \sin \beta}{2N\beta} E_1(t), \quad \text{a.e.} \ t \in [0,T^*).
\end{aligned}
\end{equation}

\noindent $\bullet$ {\bf Step 2:} Next, we study the upper bound of $E_1(t)$ on $[0,T^*)$. From \eqref{E-5-1}, we see that
\begin{equation*}
\begin{aligned}
\frac{d}{dt} E_1(t) 
&\le - \frac{K\mathcal{C}  \cos \bar{\alpha} \sin \beta}{2N\beta}   \left(E_1(t) - \frac{4N\beta\left(D_\Omega + 2K \psi_u\sin\bar{\alpha} \right)}{K\mathcal{C}  \cos \bar{\alpha} \sin \beta} \right), \quad \text{a.e.} \ t \in [0,T^*).
\end{aligned}
\end{equation*}
This implies that
\begin{equation}\label{E-5-2}
E_1(t) \le \max \left\{ E_1(0), \frac{8N\beta\left(D_\Omega + 2K \psi_u\sin\bar{\alpha} \right)}{K\mathcal{C}  \cos \bar{\alpha} \sin \beta}\right\}, \quad \text{for} \ t \in [0,T^*).
\end{equation}
According to Lemma \ref{com_ini_esti}, we have
\begin{equation*}
E_1(0) <  \beta < \pi.
\end{equation*}
Together with the assumption \eqref{K_con} on the coupling strength $K$ that
\begin{equation*}
K > \frac{8N \left( D_\Omega + 2K \psi_u \sin \bar{\alpha}\right)}{\mathcal{C} \cos \bar{\alpha} \sin \beta}\quad \Longrightarrow \quad \frac{8N\beta\left(D_\Omega + 2K \psi_u\sin\bar{\alpha} \right)}{K\mathcal{C}  \cos \bar{\alpha} \sin \beta} < \beta,
\end{equation*}
we get from \eqref{E-5-2} that
\begin{equation}\label{E-5-3}
E_1(t) \le \max \left\{ E_1(0), \frac{8N\beta\left(D_\Omega + 2K \psi_u\sin\bar{\alpha} \right)}{K\mathcal{C}  \cos \bar{\alpha} \sin \beta}\right\} < \beta, \quad \text{for} \ t \in [0,T^*).
\end{equation}

\noindent $\bullet$ {\bf Step 3:}
Due to the Lipschitz continuity of $E_1(t)$, it yields from \eqref{E-5-3} that
\begin{equation*}
E_1(T^*) \le \max \left\{ E_1(0), \frac{8N\beta\left(D_\Omega + 2K \psi_u\sin\bar{\alpha} \right)}{K\mathcal{C}  \cos \bar{\alpha} \sin \beta}\right\} < \beta.
\end{equation*}
This obviously contradicts to $\eqref{EE-4}_2$. Thus, we obtain $T^* = +\infty$, which means
\begin{equation*}
E_1(t) < \beta < \pi,\quad \forall \ 0 \le t < +\infty.
\end{equation*}
Then, we can apply the similar argument in \eqref{E-5-1} to conclude that
\begin{equation*}
\frac{d}{dt} E_1(t) \le 2 \left(D_\Omega + 2K \psi_u\sin\bar{\alpha} \right)  - \frac{K\mathcal{C}  \cos \bar{\alpha} \sin \beta}{2N\beta} E_1(t), \quad \text{a.e.}\ t \ge 0.
\end{equation*}
\end{proof}

Now, we are ready to show that all Kuramoto oscillators will be trapped into a quarter circle in finite time.

\begin{lemma}\label{phs_small_bound}
Let $(\theta(t),\omega(t))$ be a solution to system \eqref{second_theta_system} with the network structure \eqref{com_net}, and suppose the Assumption $(\mathcal{A})$ holds. Then, there exists time $t_* \ge 0$ such that
\begin{equation*}
D_\theta(t) <D^\infty < \frac{\pi}{2}, \quad t\ge t^*.
\end{equation*}
Moreover, we have
\begin{equation*}
D_\omega(t) \le \frac{32N^2\psi_u\beta^2(D_\Omega +  2K \psi_u \sin \bar{\alpha})}{\gamma K\mathcal{C}^2  \cos^2 \bar{\alpha} \sin^2 \beta}, \quad t \ge t_*.
\end{equation*}
\end{lemma}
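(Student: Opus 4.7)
The plan is to exploit the first-order Gronwall-type inequality \eqref{phs_functional_dynamics} already established in Lemma \ref{Lm3-5}, and then read off the two desired estimates directly from the structure \eqref{energy_1} of $E_1$. Since $D_\omega(t)$ and $D_a(t)$ are diameters, hence nonnegative, the definition \eqref{energy_1} immediately furnishes the two pointwise comparisons
\begin{equation*}
D_\theta(t)\le E_1(t),\qquad D_\omega(t) \le \frac{4N\psi_u\beta}{\gamma\,\mathcal{C}\cos\bar{\alpha}\sin\beta}\,E_1(t),
\end{equation*}
so the whole problem reduces to controlling $E_1$ for large $t$.

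First I would integrate \eqref{phs_functional_dynamics} by the standard linear-ODE comparison argument. Setting $\Lambda:=\frac{K\mathcal{C}\cos\bar{\alpha}\sin\beta}{2N\beta}$ and $M:=\frac{4N\beta(D_\Omega+2K\psi_u\sin\bar{\alpha})}{K\mathcal{C}\cos\bar{\alpha}\sin\beta}$, this yields a bound of the form $E_1(t)\le (E_1(0)-M)e^{-\Lambda t}+M$ for $t\ge 0$, so in particular $\limsup_{t\to\infty}E_1(t)\le M$. The role of the lower bound on $K$ in assumption \eqref{K_con} is precisely to make this asymptotic level lie strictly below the new threshold $D^\infty$: a direct rearrangement of $K>\frac{8N\beta(D_\Omega+2K\psi_u\sin\bar{\alpha})}{D^\infty\mathcal{C}\cos\bar{\alpha}\sin\beta}$ is equivalent to $2M<D^\infty$. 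By the exponential decay of $E_1$ to its equilibrium level $M$, I can therefore select a time $t_*\ge 0$ (explicitly $t_*\gtrsim \Lambda^{-1}\log\frac{E_1(0)}{M}$) such that $E_1(t)\le 2M<D^\infty$ for all $t\ge t_*$.

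Both conclusions then fall out. The first comparison gives $D_\theta(t)\le E_1(t)<D^\infty<\pi/2$ for $t\ge t_*$, which is the first claim. Substituting the same bound $E_1(t)\le 2M$ into the second comparison gives
\begin{equation*}
D_\omega(t)\le \frac{4N\psi_u\beta}{\gamma\mathcal{C}\cos\bar{\alpha}\sin\beta}\cdot 2M = \frac{32N^2\psi_u\beta^2(D_\Omega+2K\psi_u\sin\bar{\alpha})}{\gamma K\mathcal{C}^2\cos^2\bar{\alpha}\sin^2\beta},
\end{equation*}
which is exactly the required estimate. I do not anticipate a serious obstacle here, since Lemma \ref{Lm3-5} has already done the real work; the one delicate point worth double-checking is that the specific constant in \eqref{K_con} is tuned precisely to push the Gronwall equilibrium strictly below the new, tighter threshold $D^\infty$, rather than only below the coarser threshold $\beta$ used in Step 2 of the proof of Lemma \ref{Lm3-5}.
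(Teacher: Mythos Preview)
Your proposal is correct and follows essentially the same route as the paper: both apply the Gronwall inequality \eqref{phs_functional_dynamics} to drive $E_1(t)$ below $2M=\frac{8N\beta(D_\Omega+2K\psi_u\sin\bar{\alpha})}{K\mathcal{C}\cos\bar{\alpha}\sin\beta}$, invoke \eqref{K_con} to obtain $2M<D^\infty$, and then read off the bounds on $D_\theta$ and $D_\omega$ from the structure of $E_1$. The only cosmetic difference is that the paper splits into two cases and uses a linear decrease estimate (yielding the explicit bound $t_*<\frac{\beta}{2(D_\Omega+2K\psi_u\sin\bar{\alpha})}$), whereas you integrate the exponential form directly; both are valid and give equivalent conclusions.
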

\begin{proof}
We see from \eqref{phs_functional_dynamics} that 
\begin{equation}\label{E-6}
\begin{aligned}
\frac{d}{dt} E_1(t) &\le 2 \left(D_\Omega + 2K \psi_u\sin\bar{\alpha} \right)  - \frac{K\mathcal{C}  \cos \bar{\alpha} \sin \beta}{2N\beta} E_1(t)\\
&\le - \frac{K\mathcal{C}  \cos \bar{\alpha} \sin \beta}{2N\beta}   \left(E_1(t) - \frac{4N\beta\left(D_\Omega + 2K \psi_u\sin\bar{\alpha} \right)}{K\mathcal{C}  \cos \bar{\alpha} \sin \beta} \right), \quad \text{a.e.} \ t \ge 0.
\end{aligned}
\end{equation}
In the sequel, we consider two different cases, respectively.

\noindent $\diamond$ For one case that
\begin{equation*}
E_1(0) > \frac{8N\beta\left(D_\Omega + 2K \psi_u\sin\bar{\alpha} \right)}{K\mathcal{C}  \cos \bar{\alpha} \sin \beta},
\end{equation*}
we find from \eqref{E-6} that when $E_1(t) \in [\frac{8N\beta\left(D_\Omega + 2K \psi_u\sin\bar{\alpha} \right)}{K\mathcal{C}  \cos \bar{\alpha} \sin \beta}, E_1(0)]$, 
it will be strictly decreasing as below
\begin{equation*}
\begin{aligned}
\frac{d}{dt} E_1(t)
&\le - \frac{K\mathcal{C}  \cos \bar{\alpha} \sin \beta}{2N\beta}   \left(\frac{8N\beta\left(D_\Omega + 2K \psi_u\sin\bar{\alpha} \right)}{K\mathcal{C}  \cos \bar{\alpha} \sin \beta} - \frac{4N\beta\left(D_\Omega + 2K \psi_u\sin\bar{\alpha} \right)}{K\mathcal{C}  \cos \bar{\alpha} \sin \beta} \right)\\
&= - 2(D_\Omega +  2K \psi_u \sin \bar{\alpha}) < 0.
\end{aligned}
\end{equation*}
This means that
\begin{equation}\label{E-7}
E_1(t)\le   \frac{8N\beta\left(D_\Omega + 2K \psi_u\sin\bar{\alpha} \right)}{K\mathcal{C}  \cos \bar{\alpha} \sin \beta}, \quad \text{for} \ t \ge t_*,
\end{equation}
where $t_*$ can be estimated as follows
\begin{equation*}
t_* = \frac{E_1(0) - \frac{8N\beta\left(D_\Omega + 2K \psi_u\sin\bar{\alpha} \right)}{K\mathcal{C}  \cos \bar{\alpha} \sin \beta}}{2(D_\Omega +  2K \psi_u \sin \bar{\alpha})} < \frac{\beta}{2(D_\Omega +  2K \psi_u \sin \bar{\alpha})}.
\end{equation*}

\noindent $\diamond$
For the other case that 
\begin{equation*}
E_1(0) \le \frac{8N\beta\left(D_\Omega + 2K \psi_u\sin\bar{\alpha} \right)}{K\mathcal{C}  \cos \bar{\alpha} \sin \beta},
\end{equation*}
it yields from \eqref{E-6} that
\begin{equation}\label{E-8}
E_1(t) \le \frac{8N\beta\left(D_\Omega + 2K \psi_u\sin\bar{\alpha} \right)}{K\mathcal{C}  \cos \bar{\alpha} \sin \beta}, \quad t\ge 0,
\end{equation}
Hence, we set $t_* = 0$ in this case.

Thus, we collect the results \eqref{E-7} and \eqref{E-8} in above two cases to conclude that there exists time $t_* \ge 0$ such that
\begin{equation*}
E_1(t) \le \frac{8N\beta\left(D_\Omega + 2K \psi_u\sin\bar{\alpha} \right)}{K\mathcal{C}  \cos \bar{\alpha} \sin \beta} < D^\infty, \quad t \ge t^*,
\end{equation*}
where we used the assumption \eqref{K_con} yielding
\begin{equation*}
K > \frac{8N\beta\left(D_\Omega + 2K \psi_u\sin\bar{\alpha} \right)}{D^\infty \mathcal{C}  \cos \bar{\alpha} \sin \beta} \quad \Longrightarrow \quad \frac{8N\beta\left(D_\Omega + 2K \psi_u\sin\bar{\alpha} \right)}{K\mathcal{C}  \cos \bar{\alpha} \sin \beta} < D^\infty.
\end{equation*}
This further implies
\begin{equation*}
D_\theta(t) < D^\infty, \quad t \ge t_*.
\end{equation*}
Moreover, one has
\begin{equation*}
D_\omega(t) \le \frac{32N^2\psi_u\beta^2(D_\Omega +  2K \psi_u \sin \bar{\alpha})}{\gamma K\mathcal{C}^2  \cos^2 \bar{\alpha} \sin^2 \beta}, \quad t \ge t_*.
\end{equation*}
\end{proof}

\subsection{Exponential frequency synchronization}\label{sec:3.2}
In this part, we will show the frequency diameter decays to zero exponentially fast, which results in the emergence of complete frequency synchronization. To this end, we consider system \eqref{inertia_fre} starting from $t_*$ mentioned in Lemma \ref{phs_small_bound}:
\begin{equation}\label{inertia_fre2}
\gamma \ddot{\omega}_i(t) + \dot{\omega}_i(t) = \frac{K}{N} \sum_{k=1}^N \frac{\psi_{ik}}{d_i} \cos (\theta_k(t) - \theta_i(t) + \alpha_{ik})(\omega_k(t) - \omega_i(t)), \quad t \ge t_*,  \ i=1,2,\ldots,N.
\end{equation}
From $a_i(t) = \dot{\omega}_i(t)$, one has
\begin{equation}\label{first_acce2}
\gamma \dot{a}_i(t) + a_i(t) = \frac{K}{N} \sum_{k=1}^N \frac{\psi_{ik}}{d_i} \cos (\theta_k(t) - \theta_i(t) + \alpha_{ik})(\omega_k(t) - \omega_i(t)), \quad t \ge t_*,  \ i=1,2,\ldots,N.
\end{equation}
Then, we define another energy function
\begin{equation}\label{energy_2}
E_2(t) := D_\omega(t) + \frac{\mathcal{C}\cos (D^\infty + \bar{\alpha})}{4N \psi_u}\gamma D_a(t) + 2\gamma^2 D_b(t),\quad t \ge t_*,
\end{equation}
which can be utilized to validate the exponential convergence of frequency diameter to zero.

Similarly, owing to the analyticity of solution, we divide the time interval $[t_*, +\infty)$ into a union of infinitely countable intervals:
\begin{equation}\label{com_time_divide2}
[t_*, +\infty) = \bigcup_{l=1}^{+\infty} T_l, \quad T_l = [s_{l-1}, s_l), \quad s_0 =t_*,
\end{equation}
such that the orders of oscillators' frequencies, accelerations and derivatives of accelerations are unchanged on each time interval $T_l$.

We first present a dissipative estimate on the dynamics of frequency diameter.
\begin{lemma}\label{second_w_eq}
Let $(\theta(t),\omega(t))$ be a solution to system \eqref{second_theta_system} with the network structure \eqref{com_net}, and suppose the Assumption $(\mathcal{A})$ holds. Then, we have
\begin{equation}\label{fredia_s_dynamics}
\begin{aligned}
\gamma \ddot{D}_\omega(t) + \dot{D}_\omega(t) \le - \frac{K \mathcal{C}\cos (D^\infty + \bar{\alpha})}{N}  D_\omega(t), \quad t \in T_l, \ l=1,2,\ldots,
\end{aligned}
\end{equation}
where $T_l$ is defined in \eqref{com_time_divide2}.
\end{lemma}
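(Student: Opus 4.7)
The plan is to mimic the derivation of the phase-diameter inequality in Lemma \ref{second_theta_eq}, but applied instead to the frequency equation \eqref{inertia_fre2}, and exploit the quantitative phase cohesiveness $D_\theta(t) < D^\infty$ supplied by Lemma \ref{phs_small_bound} to keep all the cosines uniformly positive.

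First, I would fix $t\in T_l$ and, using the fact that the orders of the frequencies are constant on $T_l$, pick indices $p,q$ with $D_\omega(t)=\omega_p(t)-\omega_q(t)$. Subtracting the equations \eqref{inertia_fre2} for $i=p$ and $i=q$ yields, termwise,
\[
\gamma\ddot{D}_\omega(t)+\dot{D}_\omega(t)=\frac{K}{N}\sum_{k=1}^N\frac{\psi_{pk}}{d_p}\cos(\theta_k-\theta_p+\alpha_{pk})(\omega_k-\omega_p)-\frac{K}{N}\sum_{k=1}^N\frac{\psi_{qk}}{d_q}\cos(\theta_k-\theta_q+\alpha_{qk})(\omega_k-\omega_q).
\]
Since $t\ge t_*$, Lemma \ref{phs_small_bound} gives $|\theta_k-\theta_i|\le D_\theta(t)<D^\infty$, and combined with $\alpha_{ik}\le\bar\alpha$ and \eqref{frustration_con} this means every cosine above lies in $[\cos(D^\infty+\bar\alpha),1]$, in particular it is strictly positive.

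Next I would split each sum into the diagonal index, the other extremal index, and the remaining indices. In the first sum the $k=p$ contribution vanishes, and the $k=q$ contribution equals $-\frac{K}{N}\frac{\psi_{pq}}{d_p}\cos(\theta_q-\theta_p+\alpha_{pq})D_\omega(t)$; symmetrically the $k=p$ term in the second sum produces $-\frac{K}{N}\frac{\psi_{qp}}{d_q}\cos(\theta_p-\theta_q+\alpha_{qp})D_\omega(t)$. For each remaining $k\neq p,q$ we have $\omega_k-\omega_p\le 0$ and $\omega_k-\omega_q\ge 0$; so after replacing the positive coefficient $\frac{\psi_{pk}}{d_p}$ by the smaller value $\min\{\frac{\psi_{pk}}{d_p},\frac{\psi_{qk}}{d_q}\}$ in the first sum (which increases the negative quantity) and doing the analogous replacement in the second sum, those two contributions can be combined into
\[
\frac{K}{N}\min\Bigl\{\tfrac{\psi_{pk}}{d_p},\tfrac{\psi_{qk}}{d_q}\Bigr\}\Bigl[\cos(\theta_k-\theta_p+\alpha_{pk})(\omega_k-\omega_p)-\cos(\theta_k-\theta_q+\alpha_{qk})(\omega_k-\omega_q)\Bigr].
\]
Bounding each cosine from below by $\cos(D^\infty+\bar\alpha)$ (taking care of the signs of the two velocity differences), the bracket is controlled by $-\cos(D^\infty+\bar\alpha)\bigl[(\omega_p-\omega_k)+(\omega_k-\omega_q)\bigr]=-\cos(D^\infty+\bar\alpha)D_\omega(t)$.

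Summing the diagonal and the off-diagonal contributions, I obtain
\[
\gamma\ddot{D}_\omega(t)+\dot{D}_\omega(t)\le -\frac{K\cos(D^\infty+\bar\alpha)}{N}\Biggl[\frac{\psi_{pq}}{d_p}+\frac{\psi_{qp}}{d_q}+\sum_{\substack{k=1\\k\neq p,q}}^N\min\Bigl\{\tfrac{\psi_{pk}}{d_p},\tfrac{\psi_{qk}}{d_q}\Bigr\}\Biggr]D_\omega(t),
\]
and the bracketed quantity is at least $\mathcal{C}$ by \eqref{com_net}, which yields \eqref{fredia_s_dynamics}.

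The only delicate point I anticipate is the sign bookkeeping in the off-diagonal terms: because both velocity differences $\omega_k-\omega_p$ and $\omega_k-\omega_q$ have definite but opposite signs, one must be careful that replacing $\psi_{pk}/d_p$ and $\psi_{qk}/d_q$ by their minimum preserves the direction of the inequality in each sum before the lower bound on the cosines is applied. This is exactly the same trick used in the proof of Lemma \ref{second_theta_eq} for the phase diameter, and carrying it out in the second-order setting is what lets the graph constant $\mathcal{C}$ appear as the effective coupling; positivity of $\cos(D^\infty+\bar\alpha)$ via \eqref{frustration_con} is what turns this bound into a true dissipation rather than a mere inequality.
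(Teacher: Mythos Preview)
Your proposal is correct and follows exactly the route the paper intends: it says only that ``the proof is very similar to Lemma~\ref{second_theta_eq}, so we omit the details,'' and your argument---subtracting the extremal equations in \eqref{inertia_fre2}, using $D_\theta<D^\infty$ from Lemma~\ref{phs_small_bound} together with \eqref{frustration_con} to force all cosines into $[\cos(D^\infty+\bar\alpha),1]$, then applying the $\min$-decomposition to extract~$\mathcal{C}$---is precisely that adaptation. The sign bookkeeping you flag is handled correctly, since positivity of the cosines (established first) is what guarantees each off-diagonal term has a definite sign before the coefficient replacement.
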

\begin{proof}
The proof is very similar to Lemma \ref{second_theta_eq}, so we omit the details. 
\end{proof}

Similar to Lemma \ref{second_theta_eq}, due to the singularities of second-order derivative of $D_\omega(t)$ in the whole time line, the differential inequality \eqref{fredia_s_dynamics} is not sufficient to show the decay of frequency diameter. Hence, we subsequently take into consideration the dynamics of jerk (the derivative of acceleration) diameter, which may control the term $\ddot{D}_\omega(t)$.
For this, we directly differentiate \eqref{first_acce2} with respect to time and get
\begin{equation}\label{inertia_acce}
\begin{aligned}
\gamma\ddot{a}_i(t) + \dot{a}_i(t) 
&= \frac{K}{N} \sum_{j=1}^N \frac{\psi_{ij}}{d_i}\cos(\theta_j(t) - \theta_i(t)+ \alpha_{ij}) (a_j(t) - a_i(t))\\
&- \frac{K}{N} \sum_{k=1}^N \frac{\psi_{ij}}{d_i} \sin (\theta_j(t) - \theta_i(t) + \alpha_{ij})(\omega_j(t) - \omega_i(t))^2, \quad t \ge t_*.
\end{aligned}
\end{equation}
Recalling $b_i(t) = \dot{a}_i(t)$, we transform \eqref{inertia_acce} into the following form
\begin{equation}\label{first_b}
\begin{aligned}
\gamma\dot{b}_i(t) + b_i(t)  
&= \frac{K}{N} \sum_{j=1}^N \frac{\psi_{ij}}{d_i}\cos(\theta_j(t) - \theta_i(t)+ \alpha_{ij}) (a_j(t) - a_i(t))\\
&- \frac{K}{N} \sum_{k=1}^N \frac{\psi_{ij}}{d_i} \sin (\theta_j(t) - \theta_i(t) + \alpha_{ij})(\omega_j(t) - \omega_i(t))^2, \quad t \ge t_*.
\end{aligned}
\end{equation}
\begin{lemma}\label{first_b_eq}
Let $(\theta(t),\omega(t))$ be a solution to system \eqref{second_theta_system} with the network structure \eqref{com_net}, and suppose the Assumption $(\mathcal{A})$ holds. Then, we have
\begin{equation}\label{bdia_f_dynamics2}
\begin{aligned}
\gamma\dot{D}_b(t) + D_b(t) \le  2K \psi_u D_a(t) +  \frac{\mu}{\gamma} D_\omega(t), \quad t \in T_l, \ l=1,2,\ldots,
\end{aligned}
\end{equation}
where $T_l$ is defined in \eqref{com_time_divide2} and $\mu$ is given in \eqref{mu}.
\end{lemma}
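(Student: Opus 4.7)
The plan is to mirror the structure of Lemma \ref{first_a_eq} but now work with \eqref{first_b} in place of \eqref{first_acce}. Fix an interval $T_l$ from \eqref{com_time_divide2}; since the ordering of the jerks $b_i$ is constant on $T_l$, pick indices $p,q$ (independent of $t$ on $T_l$) such that $D_b(t)=b_p(t)-b_q(t)$, whence $\dot D_b(t)=\dot b_p(t)-\dot b_q(t)$. Subtracting the $i=q$ instance of \eqref{first_b} from the $i=p$ instance gives
\begin{equation*}
\gamma\dot D_b(t)+D_b(t)=\underbrace{\tfrac{K}{N}\!\sum_{j=1}^N\!\bigl[\tfrac{\psi_{pj}}{d_p}\cos(\theta_j-\theta_p+\alpha_{pj})(a_j-a_p)-\tfrac{\psi_{qj}}{d_q}\cos(\theta_j-\theta_q+\alpha_{qj})(a_j-a_q)\bigr]}_{=:I}-\underbrace{\tfrac{K}{N}\!\sum_{j=1}^N\!\bigl[\tfrac{\psi_{pj}}{d_p}\sin(\theta_j-\theta_p+\alpha_{pj})(\omega_j-\omega_p)^2-\tfrac{\psi_{qj}}{d_q}\sin(\theta_j-\theta_q+\alpha_{qj})(\omega_j-\omega_q)^2\bigr]}_{=:II}.
\end{equation*}

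For the linear piece $I$, I would use $|\cos(\cdot)|\le 1$, $\psi_{pj}/d_p,\psi_{qj}/d_q\le \psi_u$, and $|a_j-a_p|,|a_j-a_q|\le D_a(t)$, exactly as in Lemma \ref{first_a_eq}. Summing over $j$ and the two index choices yields $|I|\le 2K\psi_u D_a(t)$, which is the first term on the right of \eqref{bdia_f_dynamics2}.

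The nontrivial piece is $II$, which is quadratic in the frequency differences. Here I would first bound the sine factor by exploiting that we are past the cohesiveness time $t_\ast$: by Lemma \ref{phs_small_bound}, $D_\theta(t)<D^\infty$, so
\begin{equation*}
|\sin(\theta_j-\theta_i+\alpha_{ij})|\le |\theta_j-\theta_i|+\sin\alpha_{ij}\le D^\infty+\sin\bar\alpha.
\end{equation*}
Combining this with $|\omega_j-\omega_i|^2\le D_\omega(t)^2$ and again $\psi_{ij}/d_i\le\psi_u$ gives $|II|\le 2K\psi_u(D^\infty+\sin\bar\alpha)\,D_\omega(t)^2$. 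The final trick is to linearize the square: one factor of $D_\omega(t)$ is absorbed via the a priori bound from Lemma \ref{phs_small_bound},
\begin{equation*}
D_\omega(t)\le \frac{32N^2\psi_u\beta^2(D_\Omega+2K\psi_u\sin\bar\alpha)}{\gamma K\mathcal{C}^2\cos^2\bar\alpha\sin^2\beta},\qquad t\ge t_\ast,
\end{equation*}
leaving a single factor of $D_\omega(t)$. Multiplying everything out, the coefficient collapses to exactly $\mu/\gamma$ with $\mu$ as in \eqref{mu}, which yields the second term in \eqref{bdia_f_dynamics2}.

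The main obstacle is the quadratic $(\omega_j-\omega_i)^2$ term, which cannot appear on the right-hand side of a first-order Gronwall inequality intended to drive $D_\omega$ to zero; the key idea is to use the phase-cohesive regime $t\ge t_\ast$ twice—once through $D_\theta<D^\infty$ to keep the sine factor small, and once through the uniform upper bound on $D_\omega$ to trade one factor of $D_\omega$ for the constant that produces $\mu/\gamma$. Everything else is straightforward bookkeeping using $|\cos|\le 1$ and the arbitrariness of $T_l$, which extends the inequality across all time intervals in \eqref{com_time_divide2}.
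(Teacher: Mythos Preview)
Your proposal is correct and follows essentially the same approach as the paper: pick extremal jerk indices on $T_l$, subtract the two instances of \eqref{first_b}, bound the cosine-weighted acceleration terms by $2K\psi_u D_a(t)$ via $|\cos|\le1$, and control the quadratic frequency term by $2K\psi_u(D^\infty+\sin\bar\alpha)D_\omega(t)^2$ and then linearize using the $D_\omega$-bound from Lemma~\ref{phs_small_bound}. The only cosmetic difference is that the paper first writes $D_\theta(t)+\sin\bar\alpha$ and then invokes $D_\theta(t)<D^\infty$, whereas you jump directly to $D^\infty+\sin\bar\alpha$.
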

\begin{proof}
For any fixed time interval $T_l$ in \eqref{com_time_divide2} and any $t \in T_l$ with $l=1,2,\ldots$, there exists some indices $m$ and $n$ such that
\begin{equation}\label{F-3}
D_b(t) = b_m(t) - b_n(t).
\end{equation}
It yields from \eqref{first_b} that
\begin{equation}\label{F-4}
\begin{aligned}
&\gamma(\dot{b}_m(t) - \dot{b}_n(t)) + b_m(t) - b_n(t) \\
&= \frac{K}{N} \sum_{j=1}^N \frac{\psi_{mj}}{d_m}\cos(\theta_j - \theta_m+ \alpha_{mj}) (a_j - a_m) - \frac{K}{N} \sum_{j=1}^N \frac{\psi_{nj}}{d_n}\cos(\theta_j - \theta_n+ \alpha_{nj}) (a_j - a_n)\\
&\quad- \frac{K}{N} \sum_{j=1}^N \frac{\psi_{mj}}{d_m} [\sin (\theta_j - \theta_m) \cos \alpha_{mj} +\cos (\theta_j - \theta_m) \sin \alpha_{mj} ](\omega_j - \omega_m)^2\\
&\quad+ \frac{K}{N} \sum_{j=1}^N \frac{\psi_{nj}}{d_n} [\sin (\theta_j - \theta_n) \cos \alpha_{nj} +\cos (\theta_j - \theta_n) \sin \alpha_{nj}  ](\omega_j - \omega_n)^2.
\end{aligned}
\end{equation}
According to Lemma \ref{phs_small_bound}, we see from \eqref{F-4} that
\begin{equation*}
\begin{aligned}
\gamma\dot{D}_b(t) + D_b(t)&\le 2K \psi_u D_a(t) + 2K \psi_u [D_\theta(t)  + \sin \bar{\alpha}] D^2_\omega(t)\\
&\le 2K \psi_u D_a(t) + 2K \psi_u [D^\infty  + \sin \bar{\alpha}] \frac{32N^2\psi_u\beta^2(D_\Omega +  2K \psi_u \sin \bar{\alpha})}{\gamma K\mathcal{C}^2  \cos^2 \bar{\alpha} \sin^2 \beta}D_\omega(t), \\
&=2K \psi_u D_a(t) +  \frac{\mu}{\gamma} D_\omega(t),\quad t \in T_l,
\end{aligned}
\end{equation*}
where the relation $|\sin x| \le |x|$ is exploited and constant $\mu$ is defined in \eqref{mu}. 
\end{proof}

Finally, we will show the exponential convergence to frequency synchronization.
\begin{lemma}\label{fredia_decay}
Let $(\theta(t),\omega(t))$ be a solution to system \eqref{second_theta_system} with the network structure \eqref{com_net}, and suppose the Assumption $(\mathcal{A})$ holds. Then, we have
\begin{equation*}
\frac{d}{dt}E_2(t) \le -\frac{K \mathcal{C}\cos (D^\infty + \bar{\alpha})}{4N}  E_2(t), \quad \text{a.e.} \ t \ge t_*.
\end{equation*}
Moreover, we have
\begin{equation*}
D_\omega(t) \le E_2(t_*) e^{-\frac{K \mathcal{C}\cos (D^\infty + \bar{\alpha})}{4N}(t-t_*)}, \quad t \ge t_*.
\end{equation*}
\end{lemma}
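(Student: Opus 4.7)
The plan is to adapt the strategy of Lemma \ref{Lm3-5} to the shifted energy $E_2$, which tracks one derivative higher than $E_1$. Because we now deal with \emph{differences} of frequencies, accelerations, and jerks, there is no $D_\Omega$-type source, so the Gronwall inequality produced will be purely dissipative and will give the exponential estimate at once.

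First I would fix an interval $T_l$ from \eqref{com_time_divide2} on which the extremizing indices of $D_\omega$, $D_a$ and $D_b$ are locked, and differentiate the definition \eqref{energy_2} to obtain
\[
\frac{d}{dt}E_2(t) = \dot{D}_\omega(t) + \frac{\mathcal{C}\cos(D^\infty+\bar{\alpha})}{4N\psi_u}\gamma\dot{D}_a(t) + 2\gamma^2\dot{D}_b(t).
\]
For the first term I would invoke Lemma \ref{second_w_eq} in the rewritten form $\dot{D}_\omega \le -\gamma\ddot{D}_\omega - \frac{K\mathcal{C}\cos(D^\infty+\bar{\alpha})}{N}D_\omega$, together with the pointwise bound $|\ddot{D}_\omega(t)| \le D_b(t)$ valid on $T_l$, since the extremizing indices $p,q$ of $D_\omega$ are constant and so $\ddot{D}_\omega = b_p - b_q$. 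The $\gamma\ddot{D}_\omega$ error is then absorbed by the $-2\gamma D_b$ contribution coming from $2\gamma^2\dot{D}_b$, leaving a clean $-\gamma D_b$ term.

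For the remaining two pieces I would substitute the first-order estimates of Lemmas \ref{first_a_eq} and \ref{first_b_eq}, getting
\begin{align*}
\frac{\mathcal{C}\cos(D^\infty+\bar{\alpha})}{4N\psi_u}\gamma\dot{D}_a &\le -\frac{\mathcal{C}\cos(D^\infty+\bar{\alpha})}{4N\psi_u}D_a + \frac{K\mathcal{C}\cos(D^\infty+\bar{\alpha})}{2N}D_\omega,\\
2\gamma^2\dot{D}_b &\le -2\gamma D_b + 4\gamma K\psi_u D_a + 2\mu D_\omega.
\end{align*}
Collecting coefficients: the $D_a$ coefficient becomes $4\gamma K\psi_u - \frac{\mathcal{C}\cos(D^\infty+\bar{\alpha})}{4N\psi_u}$, which the smallness $\gamma K < \frac{\mathcal{C}\cos(D^\infty+\bar{\alpha})}{32N\psi_u^2}$ in \eqref{gammaK_con} forces below $-\frac{\mathcal{C}\cos(D^\infty+\bar{\alpha})}{8N\psi_u}$; the $D_\omega$ coefficient is $-\frac{K\mathcal{C}\cos(D^\infty+\bar{\alpha})}{2N} + 2\mu$, which the lower bound $K > \frac{8N\mu}{\mathcal{C}\cos(D^\infty+\bar{\alpha})}$ in \eqref{K_con} pushes below $-\frac{K\mathcal{C}\cos(D^\infty+\bar{\alpha})}{4N}$. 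The complementary upper bound $\gamma K < \frac{2N}{\mathcal{C}\cos(D^\infty+\bar{\alpha})}$ in \eqref{gammaK_con} is then exactly what is needed to factor the three negative terms as $-\frac{K\mathcal{C}\cos(D^\infty+\bar{\alpha})}{4N}E_2(t)$.

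This yields $\frac{d}{dt}E_2 \le -\frac{K\mathcal{C}\cos(D^\infty+\bar{\alpha})}{4N}E_2$ on each $T_l$, and since $E_2$ is Lipschitz across the countable partition of $[t_*,+\infty)$, the inequality extends almost everywhere. Gronwall's lemma gives $E_2(t) \le E_2(t_*)\exp\bigl(-\frac{K\mathcal{C}\cos(D^\infty+\bar{\alpha})}{4N}(t-t_*)\bigr)$, and the bound $D_\omega(t) \le E_2(t)$ built into the definition of $E_2$ finishes the argument. The main obstacle is the term $2\mu D_\omega$: it originates from the quadratic nonlinearity $(\omega_j-\omega_i)^2$ in \eqref{first_b} and is exactly what forces the lower bound on $K$ in assumption $(\mathcal{A}_2)$, with $\mu$ itself depending through Lemma \ref{phs_small_bound} on the post-transient uniform bound for $D_\omega$. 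The weight $\frac{\mathcal{C}\cos(D^\infty+\bar{\alpha})}{4N\psi_u}$ attached to $\gamma D_a$ in \eqref{energy_2} is pinned down by requiring this weight times $2K\psi_u$ (the forcing in Lemma \ref{first_a_eq}) to equal exactly half of $\frac{K\mathcal{C}\cos(D^\infty+\bar{\alpha})}{N}$ (the dissipation rate produced by Lemma \ref{second_w_eq}), which is the algebraic reason for that particular choice of weight.
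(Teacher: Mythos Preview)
Your proposal is correct and follows essentially the same route as the paper: combine the second-order inequality \eqref{fredia_s_dynamics} for $D_\omega$ with the first-order inequalities \eqref{accedia_f_dynamics} and \eqref{bdia_f_dynamics2} for $D_a$ and $D_b$, absorb $\gamma\ddot{D}_\omega$ via $|\ddot{D}_\omega|\le D_b$, and then use the three parameter constraints in \eqref{gammaK_con}--\eqref{K_con} exactly as you describe to close up into $-\frac{K\mathcal{C}\cos(D^\infty+\bar{\alpha})}{4N}E_2$. Your explanation of why the weight $\frac{\mathcal{C}\cos(D^\infty+\bar{\alpha})}{4N\psi_u}$ is chosen is also the right reading of the construction.
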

\begin{proof}
Recall the results in \eqref{fredia_s_dynamics} and \eqref{bdia_f_dynamics2}, and applying the similar arguments as in \eqref{accedia_f_dynamics}, we have for any $t \in T_l$ in \eqref{com_time_divide2} with $l=1,2,\ldots$, 
\begin{align}
&\gamma \ddot{D}_\omega(t) + \dot{D}_\omega(t) \le - \frac{K \mathcal{C}\cos (D^\infty + \bar{\alpha})}{N}  D_\omega(t), \label{F-5a}\\
&\gamma\dot{D}_a(t) + D_a(t) \le 2K \psi_uD_\omega(t), \label{F-5b}\\
&\gamma\dot{D}_b(t) + D_b(t) \le  2K \psi_u D_a(t) +  \frac{\mu}{\gamma} D_\omega(t). \label{F-5c}
\end{align}
Then, we combine \eqref{F-5a}, \eqref{F-5b} and \eqref{F-5c} together to have
\begin{equation*}
\begin{aligned}
&\gamma \ddot{D}_\omega(t) + \dot{D}_\omega(t) + \frac{\mathcal{C}\cos (D^\infty + \bar{\alpha})}{4N \psi_u}\gamma\dot{D}_a(t) + \frac{\mathcal{C}\cos (D^\infty + \bar{\alpha})}{4N \psi_u}D_a(t)+ 2\gamma^2\dot{D}_b(t) + 2\gamma D_b(t) \\
&\le - \frac{K \mathcal{C}\cos (D^\infty + \bar{\alpha})}{N}  D_\omega(t) + \frac{K \mathcal{C}\cos (D^\infty + \bar{\alpha})}{2N}D_\omega(t) + 4\gamma K \psi_u D_a(t) + 2 \mu D_\omega(t),  \quad t \in T_l. 
\end{aligned}
\end{equation*}
This also yields
\begin{equation*}
\begin{aligned}
&\frac{d}{dt} \left( D_\omega(t) + \frac{\mathcal{C}\cos (D^\infty + \bar{\alpha})}{4N \psi_u}\gamma D_a(t) + 2\gamma^2 D_b(t)\right)\\
&\le -\gamma \ddot{D}_\omega(t) -\frac{\mathcal{C}\cos (D^\infty + \bar{\alpha})}{4N \psi_u}D_a(t) -  2\gamma D_b(t)\\
&\quad - \frac{K \mathcal{C}\cos (D^\infty + \bar{\alpha})}{N}  D_\omega(t) + \frac{K \mathcal{C}\cos (D^\infty + \bar{\alpha})}{2N}D_\omega(t) + 4\gamma K \psi_u D_a(t) + 2 \mu D_\omega(t)\\
&\le - \left( \frac{K \mathcal{C}\cos (D^\infty + \bar{\alpha})}{2N} - 2 \mu \right)D_\omega(t) - \left(\frac{\mathcal{C}\cos (D^\infty + \bar{\alpha})}{4N \psi_u} -  4\gamma K \psi_u\right) D_a(t) - \gamma D_b(t) \\
&\le -\frac{K \mathcal{C}\cos (D^\infty + \bar{\alpha})}{4N} D_\omega(t) - \frac{\mathcal{C}\cos (D^\infty + \bar{\alpha})}{8N \psi_u}D_a(t) -\gamma D_b(t).
\end{aligned}
\end{equation*}
Here, we used the following relations
\begin{equation*}
\begin{aligned}
&|\ddot{D}_\omega(t)| \le D_b(t), \quad
 K > \frac{8N\mu}{\mathcal{C} \cos (D^\infty + \bar{\alpha})}\quad  \Longrightarrow \quad \frac{K \mathcal{C}\cos (D^\infty + \bar{\alpha})}{4N} - 2\mu >0 , \\
&\gamma K < \frac{\mathcal{C}\cos (D^\infty + \bar{\alpha})}{32N \psi^2_u} \quad \Longrightarrow \quad \frac{\mathcal{C}\cos (D^\infty + \bar{\alpha})}{8N \psi_u} - 4\gamma K \psi_u >0.\\
\end{aligned}
\end{equation*}
Moreover, under the assumption that
\begin{align*}
& \gamma K <\frac{2N }{\mathcal{C}\cos (D^\infty + \bar{\alpha})} \quad \Longrightarrow \quad \frac{1}{2 K\psi_u} > \frac{\mathcal{C}\cos (D^\infty + \bar{\alpha})}{4N \psi_u}\gamma \quad \text{and} \quad \frac{4N}{K \mathcal{C}\cos (D^\infty + \bar{\alpha})} > 2 \gamma, \\
\end{align*}
we obtain that
\begin{equation*}
\begin{aligned}
&\frac{d}{dt} \left( D_\omega(t) + \frac{\mathcal{C}\cos (D^\infty + \bar{\alpha})}{4N \psi_u}\gamma D_a(t) + 2\gamma^2 D_b(t)\right)\\
&\le-\frac{K \mathcal{C}\cos (D^\infty + \bar{\alpha})}{4N} \left(D_\omega(t) +\frac{1}{2 K\psi_u}D_a(t) + \frac{4N}{K \mathcal{C}\cos (D^\infty + \bar{\alpha})}\gamma D_b(t)\right)\\
&\le-\frac{K \mathcal{C}\cos (D^\infty + \bar{\alpha})}{4N} \left( D_\omega(t) + \frac{\mathcal{C}\cos (D^\infty + \bar{\alpha})}{4N \psi_u}\gamma D_a(t) + 2\gamma^2 D_b(t)\right), \quad t \in T_l.
\end{aligned}
\end{equation*}
Therefore from \eqref{energy_2} and the arbitrary choice of $T_l$, we conclude that
\begin{equation*}
\frac{d}{dt}E_2(t) \le -\frac{K \mathcal{C}\cos (D^\infty + \bar{\alpha})}{4N}  E_2(t), \quad \text{a.e.} \ t \ge t_*.
\end{equation*}
Furthermore, we have
\begin{equation*}
\begin{aligned}
D_\omega(t) \le E_2(t) \le E_2(t_*) e^{-\frac{K \mathcal{C}\cos (D^\infty + \bar{\alpha})}{4N}(t-t_*)}, \quad t \ge t_*.
\end{aligned}
\end{equation*}
\end{proof}

Now, we are ready to provide the proof of Theorem \ref{com_main}.\newline

\noindent {\bf Proof of Theorem \ref{com_main}:} The proof of Theorem \ref{com_main} immediately follows from Lemma \ref{phs_small_bound} and Lemma \ref{fredia_decay}.

\section{Numerical Simulations}\label{sec:5}
\setcounter{equation}{0}
In this section, we present some simulations to sustain the two main results in Theorem \ref{com_main}. We take four oscillators ($N=4$) as an example and use the classical fourth-order Runge-Kutta numerical method.

For the system \eqref{second_theta_system} with the network structure \eqref{com_net}, we consider a graph registered by
\begin{equation}\label{G-1}
\Psi = (\psi_{ik}) = 
\begin{pmatrix}
1 & 1 & 0 & 0\\
1 & 1 & 0 & 0\\
0 & 1 & 1 & 0\\
0 & 1 & 1 & 1
\end{pmatrix}
,
\end{equation}
whose diameter equals to two.
The non-homogeneous dampings $d_i$'s are randomly chosen in $(0.9, 1.0)$ and natural frequencies $\Omega_i$'s in $(-0.005,0.005)$:
\begin{equation*}
\begin{aligned}
&d = (d_1, d_2, d_3, d_4) = (0.9775, 0.9165, 0.9912, 0.9319),\\
&\Omega = (0.0013,  -0.0040, -0.0022, 0.0005).
\end{aligned}
\end{equation*}
By calculations, we have 
\begin{equation*}
\mathcal{C} \approx 1.0089 > 0, \quad D_\Omega \approx 0.0057, \quad \psi_u \approx 1.0911.
\end{equation*}
The initial phases and frequencies are randomly selected in $(0, \frac{2}{3} \pi)$ and $(-0.1, 0.1)$, respectively:
\begin{equation*}
\theta(0) = (2.0742, 0.0706, 0.8886, 1.0262), \quad \omega(0) = (0.0701, 0.0117, 0.0804, -0.0161),
\end{equation*}
which yield $D_\theta(0) = 2.0036$ and $D_\omega(0) = 0.0965$.
Moreover, we set 
\begin{equation*}
K = 780, \quad \gamma = 10^{-6}, \quad \alpha_{ik} = 10^{-6} \ \text{for any $i \ne k$}, \quad \beta = \frac{5}{6}\pi, \quad D^\infty = 0.1. 
\end{equation*} 
This means $\bar{\alpha} = 10^{-6}$.
Based on above settings, the Assumption $(\mathcal{A})$ is fulfilled. 
Then, frequency synchronization can be observed from Figure \ref{Fig1:phs1} and \ref{Fig1:fre1}, which supports Theorem \ref{com_main}. For the ease of reading,, we present the detailed formation in time interval $[0.125,0.15]$ displayed in Figure \ref{Fig1:phs1} and \ref{Fig1:fre1} by Figure \ref{Fig1:local_phs1} and \ref{Fig1:local_fre1}.

\begin{figure}[H]
\centering
\mbox{
\subfigure[]{\includegraphics[width=0.48\textwidth]{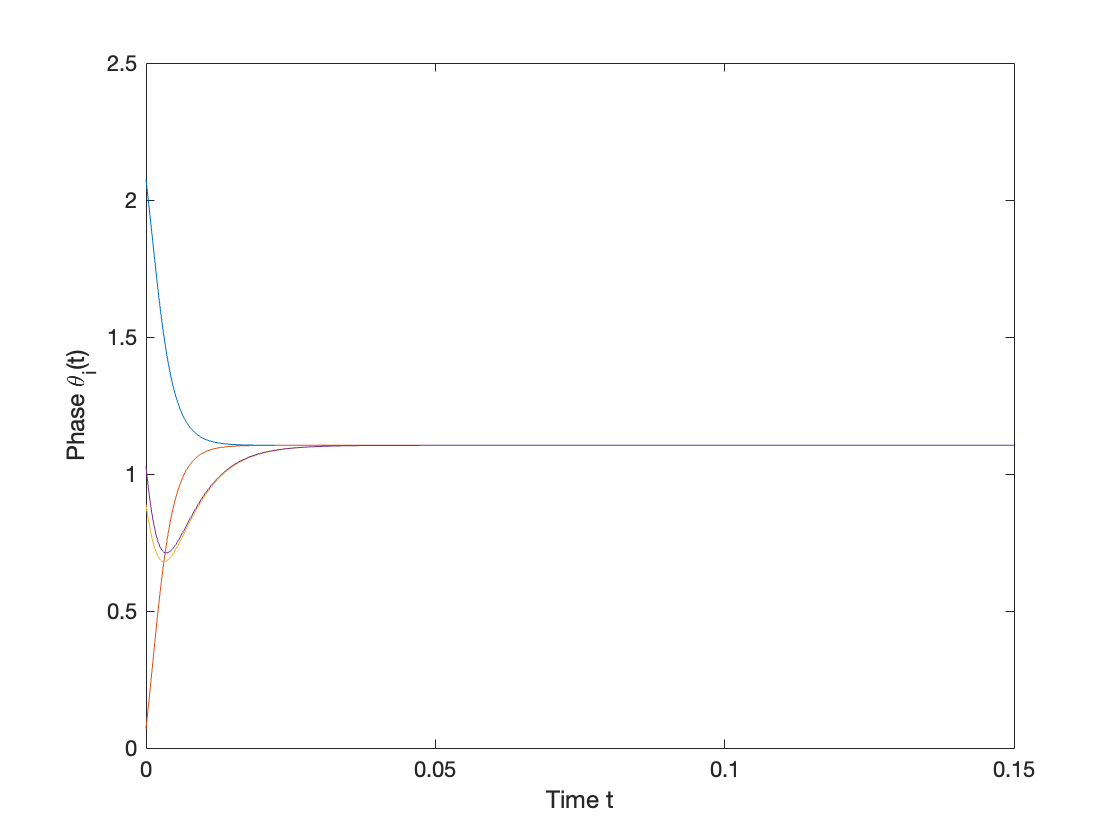}\label{Fig1:phs1}}

\subfigure[]{\includegraphics[width=0.48\textwidth]{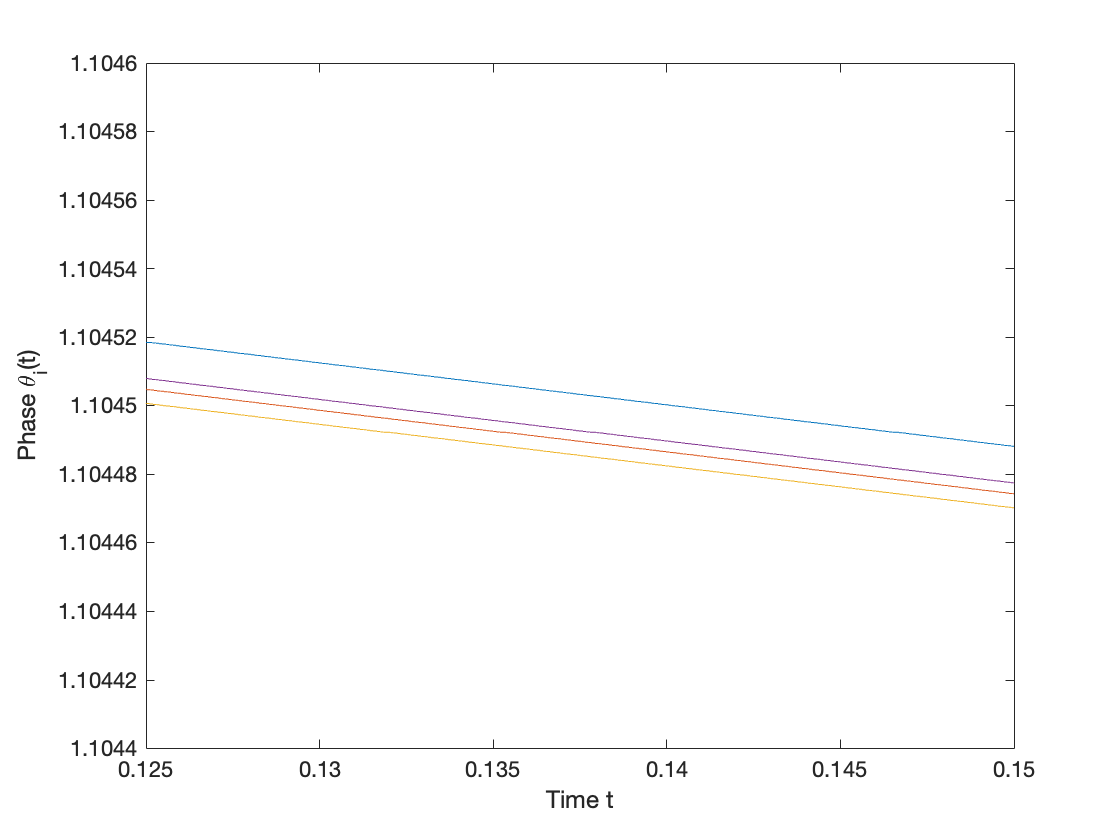}\label{Fig1:local_phs1}}
}\\

\mbox{
\subfigure[]{\includegraphics[width=0.48\textwidth]{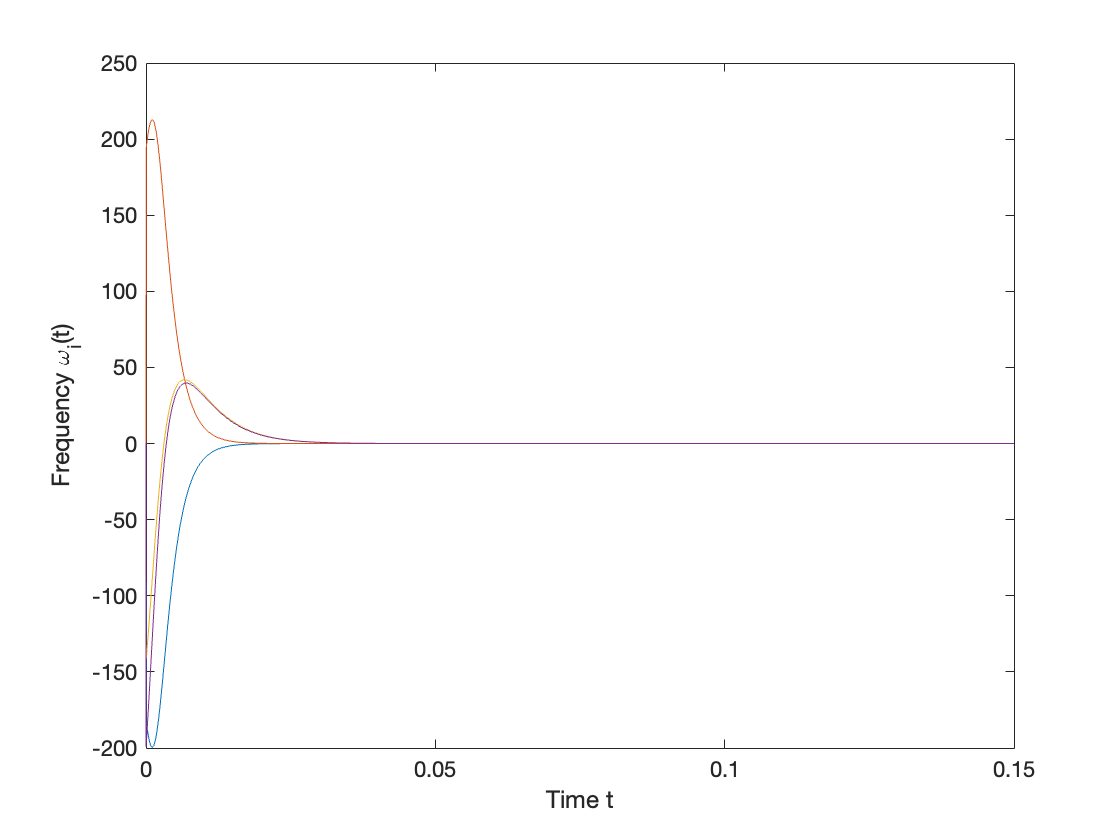}\label{Fig1:fre1}}

\subfigure[]{\includegraphics[width=0.48\textwidth]{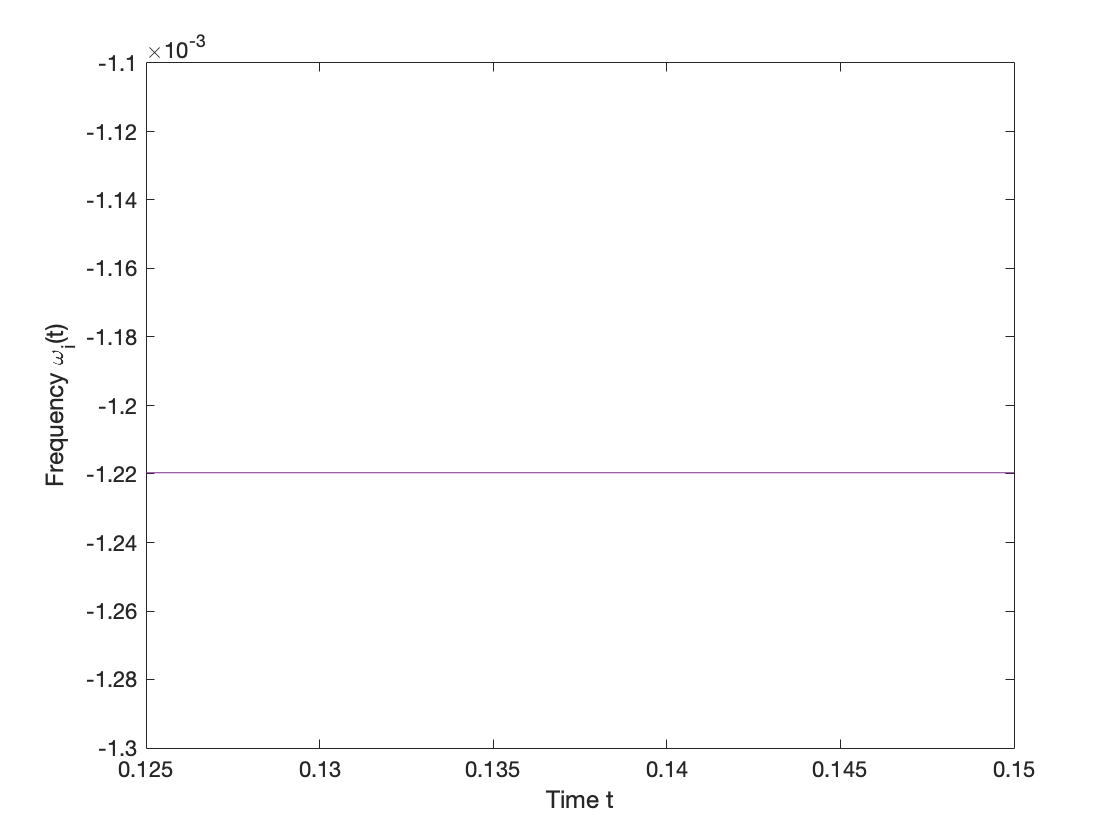}\label{Fig1:local_fre1}}
}

\caption{Complete synchronization on the network \eqref{G-1}}
\label{Fig1}
\end{figure}

\section{Summary}\label{sec:6}
In this paper, we study the synchronized behavior of the Kuramoto model with inertia and frustration effects on asymmetric networks. In particular, we consider the network with depth not greater than two, and present sufficient conditions leading to the exponential convergence of complete frequency synchronization. Our sufficient frameworks can be guaranteed by a regime in terms of small effects of inertia and frustration, and large coupling. The approach we used is mainly based on the derived first-order Gronwall type inequalities of diameter functions. We believe this method is able to be applied to many other second-order models with more general connected networks, and this will be discussed in our future work.

\section*{Acknowledgments}

The work of T. Zhu is supported by the National Natural Science Foundation of China (Grant No. 12201172), the Natural Science Research Project of Universities in Anhui Province, China (Project Number: 2022AH051790) and  the Talent Research Fund of Hefei University, China (Grant/Award Number: 21-22RC23). The work of X. Zhang is supported by the National Natural Science Foundation of China (Grant No. 12471213).


\begin{thebibliography}{00}

\bibitem{B-J-Y84}
{\sc T. A. Bargiello, F. R. Jackson and M. W. Young}, {\em Restoration of circadian behavioural rhythms by gene transfer in Drosophila}, Nature, 312 (1984), pp.~752--754.

\bibitem{B-B66}
{\sc J. Buck and E. Buck}, {\em Biology of synchronous flashing of fireflies}, Nature, 211 (1966), pp.~562--564.

\bibitem{C-C-C95}
{\sc H.-D. Chang, C.-C. Chu and G. Cauley}, {\em Direct stability analysis of electric power systems using energy functions: theory, applications, and perspective}, Proc. IEEE, 83 (1995), pp.~1497--1529. 

\bibitem{C-D-H23}
{\sc H. Cho, J.-G. Dong and S.-Y. Ha}, {\em Interplay of inertia and adaptive couplings in the emergent dynamics of Kuramoto ensemble}, J. Differ. Equ., 360 (2023), pp.~523--571.

\bibitem{C-H-J-K12} 
{\sc Y.-P. Choi, S.-Y. Ha and S. Jung}, {\em Y. Kim, Asymptotic formation and orbital stability of phase-locked states for the Kuramoto model}, Physica D, 241 (2012), pp.~735--754.

\bibitem{C-H-M18}
{\sc Y.-P. Choi, S.-Y. Ha and J. Morales}, {\em Emergent dynamics of the Kuramoto ensemble under the effect of inertia}, Discret. Contin. Dyn. Syst., 38 (2018), pp.~4875--4913.

\bibitem{C-H-N13}
{\sc Y.-P. Choi, S.-Y. Ha and S. E. Noh}, {\em On the relaxation dynamics of the Kuramoto oscillators with small inertia}, J. Math. Phys., 54 (2013), pp.~072701.

\bibitem{C-H-Y11}
{\sc Y.-P. Choi, S.-Y. Ha and S.-B. Yun}, {\em Complete synchronization of Kuramoto oscillators with finite inertia}, Physica D, 240
(2011), pp.~32--44.

\bibitem{C-L19}
{\sc Y.-P. Choi and Z. Li}, {\em Synchronization of nonuniform Kuramoto oscillators for power grids with general connectivity and dampings}, Nonlinearity, 32 (2019), pp.~559--583.

\bibitem{C-L-H-X-Y14}
{\sc Y.-P. Choi, Z. Li, S.-Y. Ha, X. Xue and S.-B. Yun}, {\em Complete entrainment of Kuramoto oscillators with inertia on networks via gradient-like flow}, J. Differ. Equ., 257 (2014), pp.~2591--2621.

\bibitem{D-Z-P-L-J18}
{\sc F. Dai, S. Zhou, T. Peron, W. Lin and P. Ji}, {\em Interplay among inertia, time delay, and frustration on synchronization dynamics}, Phys. Rev. E, 98 (2018), pp.~052218. 

\bibitem{D-H-K20}
{\sc J.-G. Dong, S.-Y. Ha and D. Kim}, {\em Emergent Behavior of the Kuramoto model with a time delay on a general digraph}, SIAM J. Appl. Dyn. Syst., 19 (2020), pp.~304--328.

\bibitem{D-B12}
{\sc F. D\"{o}rfler and F. Bullo}, {\em Synchronization and transient stability in power networks and nonuniform Kuramoto oscillators}, SIAM J. Control Optim., 50 (2012), pp.~1616--1642.

\bibitem{Do-B12}
{\sc F. D\"{o}rfler and F. Bullo}, {\em Exploring synchronization in complex oscillator networks}, IEEE Conference on Decision and Control, 2012, pp.~7157--7170.

\bibitem{D-C-B13}
{\sc F. D\"{o}rfler, M. Chertkov and F. Bullo}, {\em Synchronization in complex oscillator networks and smart grids}, Proc. Natl. Acad. Sci., 110 (2013), pp.~2005--2010.

\bibitem{E91}
{\sc B. Ermentrout}, {\em An adaptive model for synchrony in the firefly Pteroptyx malaccae}, J. Math. Biol., 29 (1991), pp.~571--585.

\bibitem{E-K91}
{\sc G. B. Ermentrout and N. Kopell}, {\em Multiple pulse interactions and averaging in systems of coupled neural oscillators}, J. Math. Biol., 29 (1991), pp.~195--217.

 \bibitem{F-B-J-K23}
 {\sc R. Farhangi, M. T. H. Beheshti, M. Jamil and A. A. Khan}, {\em An analytical approach for the stability analysis of power networks through Kuramoto oscillators model}, Commun. Nonlinear Sci. Numer. Simul., 126 (2023), pp.~107467.

\bibitem{G-W-Y-Z-X-Y20}
{\sc Y. Guo, Q. Wang, L. Yang, D. Zhang, X. Xue and D. Yu}, {\em A transient stability analysis method based on second-order nonuniform Kuramoto model}, Int. Trans. Electr. Energy Syst., 30 (2020), e12241. 

\bibitem{H-K-L14}
{\sc S.-Y. Ha, Y. Kim and Z. Li}, {\em Asymptotic synchronous behavior of Kuramoto type models with frustrations}, Netw. Heterog. Media, 9 (2014), pp.~33--64.

\bibitem{Ha-K-L14}
{\sc S.-Y. Ha, Y. Kim and Z. Li}, {\em Large-time dynamics of Kuramoto oscillators under the effects of inertia and frustration}, SIAM J. Appl. Dyn. Syst., 13 (2014), pp.~466--492.

\bibitem{H-K-P15} 
{\sc S.-Y. Ha, H. K. Kim and J. Park}, {\em Remarks on the complete synchronization of Kuramoto oscillators}, Nonlinearity, 28 (2015), pp.~1441--1462.

\bibitem{H-K-P18}
{\sc S.-Y. Ha, H. K. Kim and J. Park}, {\em Remarks on the complete synchronization for the Kuramoto model with frustrations}, Anal. Appl., 16 (2018), pp.~525--563.

\bibitem{H-K-R16} 
{\sc S.-Y. Ha, H. K. Kim and S. W. Ryoo}, {\em Emergence of phase-locked states for the Kuramoto model in a large coupling regime}, Commun. Math. Sci., 14 (2016), pp.~1073--1091.

\bibitem{H-L14}
{\sc S.-Y. Ha and Z. Li}, {\em Complete synchronization of Kuramoto oscillators with hierarchical leadership}, Commun. Math. Sci., 12 (2014), pp.~485--508.

\bibitem{H-R20}
{\sc S.-Y. Ha and S. W. Ryoo}, {\em Asymptotic phase-locking dynamics and critical coupling strength for the Kuramoto model},
Commun. Math. Phys., 377 (2020), pp.~811--857.

\bibitem{H-C-Y-S99}
{\sc H. Hong, M. Y. Choi, J. Yi and K.-S. Soh}, {\em Inertia effects on periodic synchronization in a system of coupled oscillators}, Phys. Rev. E, 59 (1999), pp.~353--363.

\bibitem{H-J-K19}
{\sc C.-H. Hsia, C.-Y. Jung and B. Kwon}, {\em On the synchronization theory of Kuramoto oscillators under the effect of inertia}, J. Differ. Equ., 267 (2019), pp.~742--775.

 \bibitem{H-J-K-M22}
{\sc C. H. Hsia, C. Y. Jung, B. Kwon and S. Moon}, {\em Synchronization of Kuramoto oscillators with the distributed time-delays and inertia effect}, Appl. Anal., 102 (2022), pp.~4330--4349.

\bibitem{H-J-K-U20}
{\sc C.-H. Hsia, C.-Y. Jung, B. Kwon and Y. Ueda}, {\em Synchronization of Kuramoto oscillators with time-delayed interactions and phase lag effect}, J. Differ. Equ., 268 (2020), pp.~7897--7939.

\bibitem{K75}
{\sc Y. Kuramoto}, {\em Self-entrainment of a population of coupled non-linear oscillators}, International symposium on mathematical problems in theoretical physics, Lect. Notes Theoret. Phys., 39 (1975), pp.~420--422.

\bibitem{L-H16}
{\sc Z. Li and S.-Y. Ha}, {\em Uniqueness and well-ordering of emergent phase-locked states for the Kuramoto model with frustration and inertia}, Math. Models Meth. Appl. Sci., 26 (2016), pp.~357--382.

\bibitem{L-X-Y14}
{\sc Z. Li, X. Xue and D. Yu}, {\em Synchronization and transient stability in power grids based on Lojasiewicz inequalities}, SIAM J. Control Optim., 52 (2014), pp.~2482--2511.

\bibitem{M-S90}
{\sc R. E. Mirollo and S. H. Strogatz}, {\em Synchronization of pulse-coupled biological oscillators}, SIAM J. Appl. Math., 50 (1990), pp.~1645--1662.

\bibitem{S-K86}
{\sc H. Sakaguchi and Y. Kuramoto}, {\em A soluble active rotator model showing phase-transitions via mutual entrainment}, Progr. Theoret. Phys., 76 (1986), pp.~576--581.

\bibitem{T-L-O97}
{\sc H.-A. Tanaka, A. J. Lichtenberg and S. Oishi}, {\em First order phase transition resulting from finite inertia in coupled oscillator systems}, Phys. Rev. Lett., 78 (1997), pp.~2104--2107.

\bibitem{Ta-L-O97}
{\sc H.-A. Tanaka, A. J. Lichtenberg and S. Oishi}, {\em Self-synchronization of coupled oscillators with hysteretic responses}, Physica D, 100 (1997), pp.~279--300.

\bibitem{T-S-S05}
{\sc B. R. Trees, V. Saranathan and D. Stroud}, {\em Synchronization in disordered Josephson junction arrays: Small-world connections and the Kuramoto model}, Phys. Rev. E, 71 (2005), pp.~016215.

\bibitem{W-Q17}
{\sc R. Wang and W.-X. Qin}, {\em Inertial effect on frequency synchronization for the second-order Kuramoto model with local coupling}, Z. Angew. Math. Phys., 68 (2017), pp.~33.

\bibitem{W-S97}
{\sc S. Watanabe and J. W. Swift}, {\em Stability of periodic solutions in series arrays of Josephson junctions with internal capacitance}, J. Nonlinear Sci., 7 (1997), pp.~503--536.

\bibitem{W67}
{\sc A. T. Winfree}, {\em Biological rhythms and behavior of populations of coupled oscillators}, J. Theoret. Biol., 16 (1967), pp.~15--42.

\bibitem{W-C21}
{\sc L. Wu and H. Chen}, {\em Spanning-tree-based synchronization conditions for second-order Kuramoto networks}, in IEEE Transactions on Circuits and Systems II: Express Briefs, 68 (2021), pp.~1448--1452.

\bibitem{W-C-B22}
{\sc L. Wu, H. Chen and T. Bashir}, {\em Exponential Cluster Phase Synchronization Conditions for Second-Order Kuramoto Oscillators}, IEEE Trans. Circuits Syst. II, Exp. Briefs, 69 (2022), pp.~1238--1242.

\bibitem{Z-Z23}
{\sc X. Zhang and T. Zhu}, {\em Emergence of synchronization in Kuramoto model with general digraph}, Discrete Contin. Dyn. Syst.-Ser. B, 28 (2023), pp.~2335--2390.


\end{thebibliography}
\end{document}